\documentclass[12pt, a4paper]{article}
\usepackage{graphicx} % Required for inserting images
\usepackage{amssymb}
\usepackage{mathrsfs}
\usepackage{amsmath}
\usepackage{amsthm}
\usepackage{appendix}
\usepackage{geometry}
\usepackage{circledsteps}
\usepackage{indentfirst}
\usepackage{xcolor}
\newtheorem{statement}{Statement}[section]

\theoremstyle{definition}
\newtheorem{definition}[statement]{Definition}

\theoremstyle{plain}
\newtheorem{theorem}[statement]{Theorem}
\newtheorem{lemma}[statement]{Lemma}
\newtheorem{proposition}[statement]{Proposition}

\newtheorem{remark}[statement]{Remark}

\numberwithin{equation}{section}

\allowdisplaybreaks[4]
\begin{document}

\title{Ergodicity of reflected stochastic reaction-diffusion equations driven by space-time white noise}

\author{Shijie Shang$^{1}$, Jianliang Zhai$^{1}$,
		Tusheng Zhang$^{1,2}$}
	\footnotetext[1]{\, School of Mathematics, University of Science and Technology of China, Hefei, China. Email: sjshang@ustc.edu.cn (Shijie Shang), zhaijl@ustc.edu.cn (Jianliang Zhai).}
	\footnotetext[2]{\, Department of Mathematics, University of Manchester, Manchester M13 9PL, United Kingdom. Email: tusheng.zhang@manchester.ac.uk }

\date{\today}

\maketitle

\begin{abstract}
\noindent
We consider the reflected stochastic reaction-diffusion equation on $[0,1]$:
\begin{align*}
	\left\{
	\begin{aligned}
		d u(t,x) &=\frac{1}{2}\partial_{xx} u(t,x)dt +b(u(t,x))dt + \sigma(u(t,x)) W(dt,dx)+L(dt,dx),\\
         u(t,x)&\geq 0, \quad t\geq 0, \ x\in [0,1],\\
		u(0,x)&=u_0(x)\geq 0, \quad x\in [0,1], \\
        u(t,0) &= u(t,1) = 0, \quad \forall\  t\geq 0,
	\end{aligned}
	\right.
\end{align*}
where the initial value $u_0$ is non-negative on $[0,1]$ satisfying $u_0(0)=u_0(1)=0$, and $ W(dt,dx)$ is a space-time white noise. The $L$ in the equation is a random measure on $[0,\infty)\times(0,1)$, which is a part of the solution pair $(u, L)$. 
%The coefficients
%$b(\cdot), \sigma(\cdot): \mathbb{R}\rightarrow \mathbb{R}$ are deterministic
%measurable functions. 

In this paper, we establish the existence and uniqueness of invariant measures, 
as well as exponential mixing for the reflected stochastic reaction diffusion equation under the dissipative condition
$$(b(x)-b(y))(x-y)\leq -\alpha (x-y)^2,$$
which include the coefficients having polynomial, even exponential growth. 
The big obstacle of utilizing the dissipative condition is the lack of the It\^{o} formula/energy equality for such equations. To circumvent the problem, we use the newly found method in our paper (arXiv:2606.26619, 2026) to fully exploit comparison principles of reflected stochastic reaction-diffusion equation.
%\\[3mm]
\par\vspace{3mm}

\noindent\textbf{Keywords:} Stochastic reaction-diffusion equations;  space-time white noise; strong Feller property; invariant measures; exponential mixing; irreducibility; unbounded domains.
\vskip 0.3cm
\noindent
	{\bf AMS Subject Classification:} Primary 60H15;  Secondary 35R60.
\end{abstract}
\newpage
\tableofcontents
\section{Introduction}
In this paper, we consider the reflected stochastic reaction-diffusion equation on $[0,1]$:
\begin{align}\label{1.1}
	\left\{
	\begin{aligned}
		d u(t,x) &=\frac{1}{2}\partial_{xx} u(t,x)dt +b(u(t,x))dt + \sigma(u(t,x)) W(dt,dx)+L(dt,dx),\quad t\geq 0, \ x\in [0,1],\\
         u(t,x)&\geq 0, \quad t\geq 0, \ x\in [0,1],\\
		u(0,x)&=u_0(x)\geq 0, \quad x\in [0,1], \\
        u(t,0) &= u(t,1) = 0, \quad \forall\  t\geq 0,
	\end{aligned}
	\right.
\end{align}
where the initial value $u_0$ is non-negative on $[0,1]$ and satisfies $u_0(0)=u_0(1)=0$, and $ W(dt,dx)$ is a space-time white noise on a probability space $(\Omega, {\cal F}, \{{\cal F}_t\}_{t\geq 0}, \mathbb{P})$. Here $\{{\cal F}_t\}_{t\geq 0}$ 
is the filtration generated by the white noise $W$ and satisfying the usual conditions. The term $L$ in equation \eqref{1.1} is a random measure on $[0,\infty)\times(0,1)$ and is part of the solution pair $(u,L)$. 
The coefficients
$b(\cdot), \sigma(\cdot): \mathbb{R}\rightarrow \mathbb{R}$ are deterministic
measurable functions. 
The following
definition is taken from \cite{MP,NP92}.
\begin{definition}
A pair $(u, L)$ is said to be a solution of equation \eqref{1.1} if
\begin{itemize}
  \item [(i)] $u$ is a continuous process on $[0,\infty)\times[0,1]$. For any $t\geq 0$ and $x\in [0,1]$, $u(t,x)$ is $\mathcal{F}_t$-measurable, $u(t,x)\geq 0$ and $u(t,0) = u(t,1) = 0$, $\mathbb{P}$-a.s.  
  \item [(ii)] $L$ is a random measure on $[0,\infty)\times (0,1)$ such that
  \begin{itemize}
    \item [(a)] $L(\{t\}\times(0,1))=0$ for any $t\geq 0$.
    \item [(b)] $\int_0^t\int_0^1 x(1-x)L(ds,dx) <\infty$ for any $t\geq 0$.
    \item [(c)] $L$ is $\mathcal{F}_t$ adapted in the sense that for any measurable mapping $\psi$, 
        \[
        \int_0^t\int_0^1 \psi(s,x) L(ds,dx) \text{ is } \mathcal{F}_t \text{-measurable.}
        \]
  \end{itemize}
  \item [(iii)] $(u,L)$ solves \eqref{1.1} in the following sense: for any $t\geq 0$, $\phi\in C^2([0,1])$ with $\phi(0) = \phi(1) = 0$, 
%      \begin{align}\label{260619.2134}
%      \int_0^1 u(t,x)\phi(x)dx = & \int_{0}^{1} u_0(x)\phi(x) dx + \frac{1}{2} \int_{0}^{t}\int_0^1 u(s,x)\phi^{\prime \prime}(x)dxds \nonumber\\ 
%      & + \int_{0}^{t} \int_{0}^{1} b(u(s,x))\phi(x)dxds 
%       + \int_{0}^{t}\int_{0}^{1} \sigma(u(s,x))\phi(x)W(ds,dx) \nonumber\\
%       & + \int_{0}^{t}\int_{0}^{1} \phi(x)L(ds,dx).
%      \end{align}
%      
      \begin{align*}
& (u(t),\phi)-\int_0^t(u(s),\phi'')ds+\int_0^t(b(u(s)),\phi)ds\\
=&(u_0, \phi)+\int_0^t\int_{0}^1\sigma(u(s,x))\phi(x)W(ds,dx)+\int_0^t\int_{0}^1\phi(x)L(ds,dx), \quad \mathbb{P}\text{-a.s.},
\end{align*} 
where $(\cdot,\cdot)$ denotes the scalar product in $L^2([0,1])$, and $u(t):=u(t, \cdot)$.
  \item [(iv)] The support of $L$ is contained in the set $\{(t,x)\in [0,\infty)\times(0,1): u(t,x)=0\}$, that is
  \[
  \int_0^{\infty}\int_{0}^{1} u(t,x) L(dt,dx) = 0,\quad \mathbb{P}\text{-a.s.}
  \]
\end{itemize}

\end{definition}

This type of reflected stochastic partial differential equations (SPDEs) were first studied by Nualart and Pardoux in \cite{NP92} when $\sigma(\cdot)=1$, and by Donati-Martin and Pardoux in \cite{MP} where a minimal solution was obtained for general diffusion coefficients $\sigma$. Then, Xu and Zhang 
in \cite{XZ09} proved the uniqueness of the solutions and also obtained a large deviation principle for the reflected SPDEs. 
Various properties of the solution of equation \eqref{1.1} were studied later in \cite{DMZ,DP,HP,ZA,Z}. SPDEs with reflection can also be used to
model the evolution of random interfaces near a hard wall. It was
proved by T. Funaki and S. Olla in \cite{FO} that the fluctuations
of
a $\nabla \phi$ interface model near a hard wall converge in law to the stationary solution of an SPDE with reflection.\\

The purpose of  this paper is to establish the ergodicity, exponential mixing for reflected  stochastic reaction-diffusion equations (SRDEs). Ergodicity concerns the long-time behaviour of stochastic systems and is one of the most important research topics in the study of stochastic partial differential equations. 
Ergodicity for stochastic evolution equations and SPDEs has been studied extensively; see, for instance, \cite{PG-3,PZ,Z,HM}.
%
%For instance, with the irreducibility in hand, one can obtain the ergodicity by proving the strong Feller property (see \cite{PG-3,PZ,Z}), or the asymptotic strong Feller (see \cite{HM}). 

For reflected stochastic reaction-diffusion equations driven by space-time noise with two reflecting walls, the existence and uniqueness of invariant measures were obtained using a coupling method in \cite{YZ}. Regarding the reflected SPDE (\ref{1.1}), only existence of invariant measures was obtained in \cite{KJ}. So far, there exist no results on the exponential mixing  due to the difficulty of utilizing the dissipativity conditions,  because of the lack of It\^{o} formula/energy equalities.

In this paper, we obtain the exponential contractivity as well as exponential mixing of solutions of reflected SPDEs driven by space-time white noise  under dissipativity assumptions, which could include coefficients with polynomial or exponential growth. 
Because It\^{o} formulae and energy equalities are not available, the existing methods in the literature fail, and it is tricky to make use of the dissipativity condition.  We overcome these issues by using the newly found method in our paper \cite{SZZ} to fully exploit the comparison principles for reflected stochastic partial differential equations.

\vskip 0.5cm

The rest of the paper is organized as follows. In Section 2, we recall the precise framework and introduce the hypotheses. In Section 3, we establish moment estimates for the stochastic convolution which will used later in the paper.
In Section 4, we prove the exponential contractivity of the solutions of reflected SRDEs. In Section 5, we provide results on exponential mixing of reflected SRDEs driven by space-time white noise.

\vskip 0.3cm

\noindent\textbf{Convention on constants.} Throughout the paper, $C$ denotes a generic positive constant whose value may change from line to line. Other constants are denoted by $C_1$, $C_2$, etc. Their precise values are not important. Dependence on parameters will be indicated when needed, for example by $C_T$, $C_p$.
\newpage

\section{Framework}

Throughout this paper, we let $L^p$ denote the standard Lebesgue space $L^p([0,1])$ and denote its norm by $\Vert \cdot\Vert_{L^p}$. Let $C([0,1])$ be the space of continuous functions on $[0,1]$ and set
\begin{align}\label{260605.1736}
  C_0^+([0,1]) := \{h\in C([0,1]): h(x)\geq 0 \text{ for all } x\in [0,1] \text{ and } h(0) = h(1) = 0\}.
\end{align}

It was shown in \cite{NP92, XZ09, WA} that $u$ is a solution $u$ to reflected SPDE (\ref{1.1})  if and only if $u$ satisfies the following integral equation:
\begin{align*}
	u(t,x)=&P_tu_0(x)+\int_{0}^{t}\int_{0}^{1} p_{t-s}(x,y)b(u(s,y))dyds\nonumber\\
	&+ \int_{0}^{t}\int_{0}^{1} p_{t-s}(x,y)\sigma(u(s,y))W(ds,dy) \nonumber\\
& + \int_{0}^{t}\int_{0}^{1} p_{t-s}(x,y)L(ds,dy),\quad  \mathbb{P}\text{-a.s.},
\end{align*}
where $p_t(x,y)$ is the heat kernel of $\frac{1}{2}\partial_{xx}$ on $[0,1]$ with the Dirichlet boundary condition, and
\begin{align}\label{260603.1931}
 P_t f(x) := \int_{0}^{1} p_{t}(x,y) f(y)dy, \quad x\in [0,1].
\end{align}

\vskip 0.6cm

Throughout the paper, we use the following heat kernel estimates:
\begin{gather}
\label{260207.1152}
  \int_{0}^{1} p_t(x,y) dx \leq 1 , \quad \forall\ t> 0, \ y\in [0,1],\\
  \label{260207.2020}
  \int_{0}^{1} p_t(x,y)^2  dx \leq \frac{1}{2\sqrt{\pi t}}, \quad \forall\ t> 0, \ y\in [0,1].
\end{gather}
By H\"{o}lder's inequality and Fubini's theorem, we deduce from (\ref{260207.1152}) that 
for any $p\in [1,\infty]$,
\begin{align}\label{260208.1019}
  \Vert P_t f\Vert_{L^p} \leq \Vert f\Vert_{L^p}.
\end{align}

\vskip 0.4cm
We shall use the following  hypotheses in various places in the paper.
\vskip 0.4cm
\noindent {\bf (H1)} $\sigma$ is Lipschitz, i.e.,  there exists a constant $L_{\sigma}$ such that
\begin{align*}
|\sigma(x) - \sigma(y)| \leq L_{\sigma} |x-y|, \quad\forall\  x,y\in\mathbb{R}.
\end{align*}
\noindent {\bf (H2)} $b$ is dissipative, i.e. for some $\alpha>0$,
\begin{align*}
  (b(x) - b(y))(x-y) \leq -\alpha (x-y)^2, \quad\forall\  x,y\in\mathbb{R}.
\end{align*}

\vskip 0.6cm

\section{Uniform-in-time estimate}

We first establish uniform-in-time estimates for stochastic convolutions, which are of independent interest. 
% These bounds will play a crucial role  in the proof of the main result in next section. 
\begin{proposition}\label{estimates 001}
  Let $\{\sigma(s,y): (s,y)\in\mathbb{R}_+\times [0,1]\}$ be a random field such that the stochastic integral against space-time white noise is well defined. Then for any $p>2$, $\beta>0$ and $\alpha>0$ satisfying $\frac{1}{2p}<\alpha<\frac{1}{4}$, there exists a constant $C_{p,\beta,\alpha}>0$ independent of $t$ such that for any $t\geq 0$,
  \begin{align}\label{101.1}
    & {\mathbb{E}}\left[\sup_{x\in [0,1]}\left|\int_0^t\int_0^1 e^{-\beta (t-s)}p_{t-s}(x,y)\sigma(s,y)\,W(ds,dy) \right|^p\right] \nonumber\\
    \leq & C_{p,\beta,\alpha} \sup_{s\in [0,t]}\left(\int_0^s (s-r)^{-2\alpha-\frac{1}{2}} e^{-2\beta (s-r)} \sup_{z\in[0,1]}\left\Vert \sigma(r,z)\right\Vert_{L^p(\Omega)}^2\,dr\right)^{\frac{p}{2}} .
  \end{align}
In particular, for any $T>0$, 
  \begin{align}\label{260617.2015}
     & \sup_{t\in [0,T]}\left\Vert \sup_{x\in [0,1]}\left|\int_0^t\int_0^1 e^{-\beta (t-s)}p_{t-s}(x,y)\sigma(s,y)\,W(ds,dy) \right|\right\Vert_{L^p(\Omega)} \nonumber\\
    \leq  & C_{p,\beta} \sup_{s\in [0,T]} \sup_{y\in[0,1]}\Vert \sigma(s,y)\Vert_{L^p(\Omega)},
%    \sup_{s\in [0,t]}\left(\int_0^s (s-r)^{-2\alpha-\frac{1}{2}} e^{-2\rho (s-r)} \sup_{z\in[0,1]}\left\Vert \sigma(r,z)\right\Vert_{L^p(\Omega)}^2\,dr\right)^{\frac{p}{2}} .
  \end{align}
where the constant $C_{p,\beta}$ is independent of $T$ and can be bounded by
\begin{align}\label{260618.1614}
  C_{p,\beta} < \frac{\sqrt{4p}}{\pi}\left(\frac{1}{2\sqrt{\pi}}\right)^{\frac{1}{p}+\frac{1}{2}} \frac{1}{\beta^{\frac{1}{4}-\frac{1}{2p}}} \times \left(\Gamma\Big(\frac{p-2}{6p}\Big)\right)^{\frac{3}{2}}, 
\end{align}
where $\Gamma(\cdot)$ is the gamma function.

\end{proposition}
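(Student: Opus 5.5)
The natural route is the Da Prato--Kwapień--Zabczyk factorization method, adapted to the exponentially damped kernel $e^{-\beta t}p_t$. The semigroup property $\int_0^1 p_{t-s}(x,z)p_{s-r}(z,y)\,dz=p_{t-r}(x,y)$ and the identity $\int_r^t (t-s)^{\alpha-1}(s-r)^{-\alpha}ds=\frac{\pi}{\sin(\pi\alpha)}$ give, by stochastic Fubini,
\begin{align*}
\int_0^t\int_0^1 e^{-\beta(t-s)}p_{t-s}(x,y)\sigma(s,y)W(ds,dy)=\frac{\sin(\pi\alpha)}{\pi}\int_0^t\int_0^1 (t-s)^{\alpha-1}e^{-\beta(t-s)}p_{t-s}(x,z)Y_\alpha(s,z)\,dz\,ds,
\end{align*}
where
\begin{align*}
Y_\alpha(s,z):=\int_0^s\int_0^1 (s-r)^{-\alpha}e^{-\beta(s-r)}p_{s-r}(z,y)\sigma(r,y)W(dr,dy).
\end{align*}

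To bound the supremum over $x$, apply Hölder's inequality in $(s,z)$ with exponents $p$ and $q=p/(p-1)$:
\begin{align*}
\sup_x|\cdot|\le \frac{1}{\pi}\sup_x\Big(\int_0^t\int_0^1 (t-s)^{q(\alpha-1)}e^{-q\beta(t-s)}p_{t-s}(x,z)^q\,dz\,ds\Big)^{1/q}\Big(\int_0^t\int_0^1|Y_\alpha(s,z)|^p\,dz\,ds\Big)^{1/p}.
\end{align*}
Interpolating \eqref{260207.1152} and \eqref{260207.2020} (with the symmetry of $p_t$) gives $\int_0^1 p_{t-s}(x,z)^q dz\le (2\sqrt{\pi(t-s)})^{-(q-1)}$. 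The $s$-integral is then at most $\int_0^\infty u^{q(\alpha-1)-(q-1)/2}e^{-q\beta u}du$, which is finite exactly when $\alpha>\frac{1}{2p}$, and it is independent of $t$. This yields a Gamma-function constant times a power of $\beta$.

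The main obstacle is that the second factor contains $\int_0^t$, which grows with $t$ if bounded crudely. To handle it, keep an extra damping factor: split $e^{-\beta(t-s)}=e^{-\beta(t-s)/2}e^{-\beta(t-s)/2}$, put one half on the $Y$ side, and use the damping inside $Y_\alpha$ as well (for instance, define $Y$ with $e^{-\beta(s-r)/2}$ and keep the remaining exponentials). Then $\mathbb{E}\int_0^t e^{-p\beta(t-s)/2}\int_0^1|Y(s,z)|^p\,dz\,ds\le \frac{C}{\beta}\sup_{s\le t,z}\mathbb{E}|Y(s,z)|^p$, which is uniform in $t$. For $\mathbb{E}|Y(s,z)|^p$, use the BDG inequality and then Minkowski's inequality in $L^{p/2}(\Omega)$:
\begin{align*}
\mathbb{E}|Y(s,z)|^p\le C_p\Big(\int_0^s (s-r)^{-2\alpha}e^{-2\beta(s-r)}\int_0^1 p_{s-r}(z,y)^2\,dy\,\sup_y\|\sigma(r,y)\|_{L^p(\Omega)}^2\,dr\Big)^{p/2}.
\end{align*}
Applying \eqref{260207.2020} turns this into the right-hand side of \eqref{101.1}, with kernel $(s-r)^{-2\alpha-1/2}$, integrable since $\alpha<1/4$. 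I expect the bookkeeping of the exponentials needed to keep every constant $t$-independent to be the delicate point.

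For \eqref{260617.2015}, pull $\sup_{r,z}\|\sigma\|_{L^p(\Omega)}^2$ out of \eqref{101.1}. The remaining integral is $\int_0^\infty u^{-2\alpha-1/2}e^{-2\beta u}du=\Gamma(\frac12-2\alpha)(2\beta)^{2\alpha-\frac12}$. Finally, optimize the choice of $\alpha$, e.g. $\alpha=\frac{1}{6}+\frac{1}{6p}$ so that the Gamma arguments coincide at $\frac{p-2}{6p}$. Tracking the BDG constant ($\sqrt{4p}$-type), the factor $(2\sqrt\pi)^{-1/p-1/2}$ from the heat-kernel bounds, and the Gamma factors yields the explicit bound \eqref{260618.1614}.
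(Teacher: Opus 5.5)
\textbf{Gap: the joint H\"older step in $(s,z)$.} The integrability claim attached to this step is miscomputed. The exponent of $u=t-s$ in your first factor is $q(\alpha-1)-\frac{q-1}{2}$. This exceeds $-1$ if and only if $q\alpha>\frac{3}{2}(q-1)$, i.e.\ $\alpha>\frac{3}{2p}$, not $\alpha>\frac{1}{2p}$. The spatial bound $\int_0^1 p_u(x,z)^q\,dz\lesssim u^{-(q-1)/2}$ is sharp for small $u$, so nothing is lost there. It is H\"older in the time variable that costs an extra $\frac1p$.

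The consequences are as follows.
\begin{itemize}
\item For $2<p\leq 6$ one has $\frac{3}{2p}\geq\frac14$, so no $\alpha$ is admissible. Your argument then proves neither \eqref{101.1} nor \eqref{260617.2015}.
\item For $p>6$ you only reach $\alpha\in(\frac{3}{2p},\frac14)$, not the full range $(\frac{1}{2p},\frac14)$.
\item Your optimizing choice $\alpha=\frac{p+1}{6p}$ is admissible only when $p>8$.
\item Even then, your constant contains $\Gamma\big(q(\alpha-\frac{3}{2p})\big)^{1/q}$ and the factor $(\frac{2}{p\beta})^{1/p}$ from splitting the exponential, rather than $\Gamma(\alpha-\frac{1}{2p})$. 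So the explicit bound \eqref{260618.1614} does not follow from ``tracking constants''.
\end{itemize}

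\textbf{The missing idea.} Apply H\"older only in space, and keep the time integral as an $L^1(ds)$-integral handled by Minkowski's integral inequality in $L^p(\Omega)$. For each fixed $s$, your own interpolation bound gives, uniformly in $x$,
\[
\int_0^1 p_{t-s}(x,z)|Y_\alpha(s,z)|\,dz\leq (2\sqrt{\pi(t-s)})^{-1/p}\Vert Y_\alpha(s,\cdot)\Vert_{L^p}.
\]
The paper obtains the same bound via Jensen with respect to $p_{t-s}(x,z)\,dz$ followed by Cauchy--Schwarz. Hence, pathwise,
\[
\sup_{x\in[0,1]}\left|\int_0^t\int_0^1 e^{-\beta(t-s)}p_{t-s}(x,y)\sigma(s,y)\,W(ds,dy)\right|\leq \frac{\sin\pi\alpha}{\pi}\,(2\sqrt{\pi})^{-\frac1p}\int_0^t (t-s)^{\alpha-1-\frac{1}{2p}}e^{-\beta(t-s)}\Vert Y_\alpha(s,\cdot)\Vert_{L^p}\,ds .
\]
Now take the $L^p(\Omega)$-norm inside the $ds$-integral. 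Up to the prefactor, the left side is bounded in $L^p(\Omega)$ by $\Gamma(\alpha-\frac{1}{2p})\,\beta^{-(\alpha-\frac{1}{2p})}\sup_{s\leq t,\,z}\Vert Y_\alpha(s,z)\Vert_{L^p(\Omega)}$.

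This route has three advantages:
\begin{itemize}
\item It needs exactly $\alpha>\frac{1}{2p}$.
\item It is uniform in $t$ without splitting the exponential.
\item Combined with your BDG/Minkowski step and the factor $\Gamma(\frac12-2\alpha)$, it is what makes the choice $\alpha=\frac{p+1}{6p}$ produce \eqref{260618.1614}.
\end{itemize}
Your other ingredients (the factorization, BDG plus Minkowski in $L^{p/2}(\Omega)$, and the final Gamma computation) coincide with the paper's.
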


\begin{proof}
It suffices to consider the case where the right hand side of (\ref{101.1}) is finite. 
%Obviously, we can assume that the right hand side of (\ref{101.1}) is finite.
We employ the factorization method (see e.g. \cite{PG-2}). Choose $\alpha$ such that  $\frac{1}{2p}<\alpha<\frac{1}{4}$. This is possible because $p>2$. Let
\begin{align*}
  (J_{\alpha}\sigma)(s,y):&= \int_0^s\int_0^1 (s-r)^{-\alpha} e^{-\beta (s-r)}p_{s-r}(y,z)\sigma(r,z)\,W(\mathrm{d}r,\mathrm{d}z), \\
  (J^{\alpha-1}f)(t,x):&= \frac{\sin\pi\alpha}{\pi}\int_0^t\int_0^1 (t-s)^{\alpha-1} e^{-\beta (t-s)} p_{t-s}(x,y)f(s,y)\,\mathrm{d}s\mathrm{d}y.
\end{align*}
By the stochastic Fubini theorem (see Theorem 2.6 in \cite{WA}),  for any $(t,x)\in\mathbb{R}_+\times[0,1]$,
\begin{align*}
  \int_0^t\int_0^1 e^{-\beta(t-s)}p_{t-s}(x,y)\sigma(s,y)\,W(\mathrm{d}s,\mathrm{d}y)=J^{\alpha-1}(J_{\alpha}\sigma)(t,x).
\end{align*}

\noindent Therefore
\begin{align*}
   \sup_{x\in [0,1]}\left|\int_0^t\int_0^1 e^{-\beta(t-s)} p_{t-s}(x,y)\sigma(s,y)\,W(\mathrm{d}s,\mathrm{d}y)\right| 
  = \sup_{x\in [0,1]}\left|J^{\alpha-1}(J_{\alpha}\sigma)(t,x)\right|, \quad \mathbb{P}\text{-a.s.}.
\end{align*}
By H\"{o}lder's inequality, \eqref{260207.1152} and \eqref{260207.2020}, we have
{\allowdisplaybreaks\begin{align}\label{104.1}
  & {\mathbb{E}}\sup_{x\in [0,1]}\left|\int_0^t\int_0^1 e^{-\beta(t-s)}p_{t-s}(x,y)\sigma(s,y)\,W(\mathrm{d}s,\mathrm{d}y)\right|^p \nonumber\\
  =& {\mathbb{E}}\sup_{x\in [0,1]}\left|\frac{\sin\pi\alpha}{\pi} \int_0^t\int_0^1 (t-s)^{\alpha-1} e^{-\beta(t-s)} p_{t-s}(x,y)J_{\alpha}\sigma(s,y)\,\mathrm{d}s\mathrm{d}y\right|^p \nonumber\\
  \leq & \left|\frac{\sin\pi\alpha}{\pi}\right|^p {\mathbb{E}}\sup_{x\in [0,1]}\bigg\{\int_0^t (t-s)^{\alpha-1} e^{-\beta(t-s)} \nonumber\\
  &~~~~~~~~~~~~~\times\left(\int_0^1 p_{t-s}(x,y)|J_{\alpha}\sigma(s,y)|\,\mathrm{d}y\right)\,\mathrm{d}s\bigg\}^p \nonumber\\
  \leq & \left|\frac{\sin\pi\alpha}{\pi}\right|^p {\mathbb{E}}\sup_{x\in [0,1]}\Bigg\{\int_0^t (t-s)^{\alpha-1} e^{-\beta(t-s)}  \nonumber\\
  &~~~~~~~~~~~~~\times\left(\int_0^1 p_{t-s}(x,y)|J_{\alpha}\sigma(s,y)|^{\frac{p}{2}}\,\mathrm{d}y\right)^{\frac{2}{p}}\,\mathrm{d}s\Bigg\}^p \nonumber\\
  \leq & \left|\frac{\sin\pi\alpha}{\pi}\right|^p {\mathbb{E}}\sup_{x\in [0,1]}\Bigg\{\int_0^t (t-s)^{\alpha-1} e^{-\beta(t-s)} \nonumber\\
  &~~~~~~~~~~~~~\times\left(\int_0^1 p_{t-s}(x,y)^2\,\mathrm{d}y\right)^{\frac{1}{2}\times\frac{2}{p}}\left(\int_0^1|J_{\alpha}\sigma(s,y)|^p\,\mathrm{d}y\right)^{\frac{1}{2}\times\frac{2}{p}}\,\mathrm{d}s\Bigg\}^p \nonumber\\
  \leq & \left|\frac{\sin\pi\alpha}{\pi}\right|^p \frac{1}{2\sqrt{\pi}} {\mathbb{E}}\left\{\int_0^t (t-s)^{\alpha-1-\frac{1}{2p}} e^{-\beta(t-s)} \left(\int_0^1|J_{\alpha}\sigma(s,y)|^p\,\mathrm{d}y\right)^{\frac{1}{p}}\,\mathrm{d}s\right\}^p \nonumber\\
  \leq & \left|\frac{\sin\pi\alpha}{\pi}\right|^p \frac{1}{2\sqrt{\pi}} \left\{ \int_0^t (t-s)^{\alpha-1-\frac{1}{2p}}e^{-\beta(t-s)} \left\Vert \left(\int_0^1|J_{\alpha}\sigma(s,y)|^p\,\mathrm{d}y\right)^{\frac{1}{p}} \right\Vert_{L^p(\Omega)} ds \right\}^p \nonumber\\
  \leq & \left|\frac{\sin\pi\alpha}{\pi}\right|^p \frac{1}{2\sqrt{\pi}} \left\{ \int_0^t (t-s)^{\alpha-1-\frac{1}{2p}}e^{-\beta(t-s)}  \left(\int_0^1 \mathbb{E}|J_{\alpha}\sigma(s,y)|^p\,\mathrm{d}y\right)^{\frac{1}{p}}  ds \right\}^p \nonumber\\
  \leq & \left|\frac{\sin\pi\alpha}{\pi}\right|^p \frac{1}{2\sqrt{\pi}} \left\{ \int_0^t (t-s)^{\alpha-1-\frac{1}{2p}}e^{-\beta(t-s)} ds \right\}^p \sup_{s\in [0,t]} \sup_{y\in [0,1]} \mathbb{E}|J_{\alpha}\sigma(s,y)|^p \nonumber\\
  \leq & C_{p,\beta,\alpha}^{\prime} \sup_{s\in [0,t]} \sup_{y\in [0,1]} \mathbb{E}|J_{\alpha}\sigma(s,y)|^p,
\end{align}}
where we have used the condition $\alpha >\frac{1}{2p}$, so that
\begin{align}\label{260617.1328}
  C_{p,\beta,\alpha}^{\prime}= & \left|\frac{\sin\pi\alpha}{\pi}\right|^p \frac{1}{2\sqrt{\pi}} \times \left(\int_0^{\infty} s^{\alpha-1-\frac{1}{2p}} e^{-\beta s}\,\mathrm{d}s\right)^{p} \nonumber\\
  = & \left|\frac{\sin\pi\alpha}{\pi}\right|^p \frac{1}{2\sqrt{\pi}} \times \left(\frac{1}{\beta^{\alpha - \frac{1}{2p}}} \Gamma\big(\alpha - \frac{1}{2p}\big) \right)^p.
\end{align}

For any fixed $(s,y)\in[0,T]\times[0,1]$, let
\[
Z_t:=\int_0^t\int_0^1 (s-r)^{-\alpha} e^{-\beta(s-r)} p_{s-r}(y,z)\sigma(r,z)\,W(\mathrm{d}r,\mathrm{d}z),\quad t\in[0,s].
\]
Then it is easy to see that $\{Z_t\}_{t\in[0,s]}$ is a real-valued martingale (on the interval $[0, s]$). 
Applying the Burkholder-Davis-Gundy inequality (see Theorem B.1 in \cite{K14}), we have for $t\in[0,s]$,
\begin{align*}
	\mathbb{E}|Z_t|^p \leq & (4p)^{\frac{p}{2}} \mathbb{E}\langle Z\rangle_t^{\frac{p}{2}} \nonumber\\
=&(4p)^{\frac{p}{2}}\mathbb{E}\left(\int_0^t\int_0^1 (s-r)^{-2\alpha} e^{-2\beta(s-r)} p_{s-r}(y,z)^2\sigma(r,z)^2\,\mathrm{d}r\mathrm{d}z\right)^{\frac{p}{2}}.
\end{align*}
\noindent Taking $\frac{2}{p}$-th power on both sides of the above inequality, using (\ref{260207.2020}) and H\"{o}lder's inequality, we get
\begin{align*}
& \Vert J_{\alpha}\sigma(s,y)\Vert_{L^p(\Omega)}^2 
 = \left\Vert Z_s\right\Vert_{L^p(\Omega)}^2 \nonumber\\
 \leq & 4p\left\Vert\int_0^s\int_0^1(s-r)^{-2\alpha} e^{-2\beta (s-r)} p_{s-r}(y,z)^2\sigma(r,z)^2\,\mathrm{d}r\mathrm{d}z\right\Vert_{L^{\frac{p}{2}}(\Omega)}\nonumber\\
 \leq & 4p \int_0^s\int_0^1(s-r)^{-2\alpha} e^{-2\beta(s-r)} p_{s-r}(y,z)^2\left\Vert\sigma(r,z)\right\Vert_{L^{p}(\Omega)}^2\,\mathrm{d}r\mathrm{d}z \nonumber\\
 \leq & 4p\int_0^s(s-r)^{-2\alpha} e^{-2\beta(s-r)}  \left(\int_0^1 p_{s-r}(y,z)^2\,\mathrm{d}z\right)\sup_{z\in[0,1]}\left\Vert\sigma(r,z)\right\Vert_{L^p(\Omega)}^2\,\mathrm{d}r  \nonumber\\
  \leq & 4p\frac{1}{2\sqrt{\pi}} \int_0^s(s-r)^{-2\alpha-\frac{1}{2}} e^{-2\beta(s-r)}  \sup_{z\in[0,1]}\left\Vert\sigma(r,z)\right\Vert_{L^p(\Omega)}^2\,\mathrm{d}r  .
\end{align*}
Therefore we take $\frac{p}{2}$-th power on both sides of the above inequality to obtain
\begin{align}\label{260617.1326}
  & \sup_{s\in [0,t]} \sup_{y\in [0,1]} \mathbb{E}|J_{\alpha}\sigma(s,y)|^p \nonumber\\
\leq & \left(4p\frac{1}{2\sqrt{\pi}}\right)^{\frac{p}{2}} \sup_{s\in [0,t]}\left( \int_0^s(s-r)^{-2\alpha-\frac{1}{2}} e^{-2\beta(s-r)}  \sup_{z\in[0,1]}\left\Vert\sigma(r,z)\right\Vert_{L^p(\Omega)}^2\,\mathrm{d}r \right)^{\frac{p}{2}}.
\end{align}
The condition $\alpha <\frac{1}{4}$ guarantees that the above integral is finite. 
Combining (\ref{104.1}), (\ref{260617.1328}) with (\ref{260617.1326}), we obtain
\begin{align}\label{260621.1310}
  & \mathbb{E}\sup_{x\in [0,1]}\left|\int_0^t\int_0^1 e^{-\beta(t-s)}p_{t-s}(x,y)\sigma(s,y)\,W(\mathrm{d}s,\mathrm{d}y)\right|^p \nonumber\\
  \leq & C_{p,\beta,\alpha} \sup_{s\in [0,t]}\left( \int_0^s(s-r)^{-2\alpha-\frac{1}{2}} e^{-2\beta(s-r)}  \sup_{z\in[0,1]}\left\Vert\sigma(r,z)\right\Vert_{L^p(\Omega)}^2\,\mathrm{d}r \right)^{\frac{p}{2}} ,
\end{align}
where
\begin{align}
  C_{p,\beta,\alpha} = & C_{p,\beta,\alpha}^{\prime}\times \left(4p\frac{1}{2\sqrt{\pi}}\right)^{\frac{p}{2}} \nonumber\\
  = & \left|\frac{\sin\pi\alpha}{\pi}\right|^p (4p)^{\frac{p}{2}} \left(\frac{1}{2\sqrt{\pi}}\right)^{\frac{p}{2}+1} \times \left(\frac{1}{\beta^{\alpha - \frac{1}{2p}}} \Gamma\big(\alpha - \frac{1}{2p}\big)\right)^p.
\end{align}
This proves (\ref{101.1}).

Taking $\frac{1}{p}$-th power on \eqref{260621.1310} and then taking the supremum over $t\in [0,T]$, we obtain
\begin{align*}
  & \sup_{t\in [0,T]}\left(\mathbb{E}\sup_{x\in [0,1]}\left|\int_0^t\int_0^1 e^{-\beta(t-s)}p_{t-s}(x,y)\sigma(s,y)\,W(\mathrm{d}s,\mathrm{d}y)\right|^p\right)^{\frac{1}{p}} \nonumber\\
  \leq & C_{p,\beta,\alpha}^{\frac{1}{p}} \sup_{s\in [0,T]}\left( \int_0^s(s-r)^{-2\alpha-\frac{1}{2}} e^{-2\beta(s-r)}  \sup_{z\in[0,1]}\left\Vert\sigma(r,z)\right\Vert_{L^p(\Omega)}^2\,\mathrm{d}r \right)^{\frac{1}{2}} \nonumber\\
  \leq & C_{p,\beta,\alpha}^{\frac{1}{p}}\sup_{s\in [0,T]}\left( \int_0^s(s-r)^{-2\alpha-\frac{1}{2}} e^{-2\beta(s-r)}  \,\mathrm{d}r \right)^{\frac{1}{2}} \times \sup_{r\in [0,T]}\sup_{z\in[0,1]}\left\Vert\sigma(r,z)\right\Vert_{L^p(\Omega)} \nonumber\\
  \leq & C_{p,\beta} \sup_{r\in [0,T]}\sup_{z\in[0,1]}\left\Vert\sigma(r,z)\right\Vert_{L^p(\Omega)} ,
\end{align*}
where 
\begin{align*}
  C_{p,\beta} = & \min_{\frac{1}{2p}<\alpha<\frac{1}{4}} \left[ C_{p,\beta,\alpha}^{\frac{1}{p}}\sup_{s\in [0,T]}\left( \int_0^s r^{-2\alpha-\frac{1}{2}} e^{-2\beta r} dr \right)^{\frac{1}{2}} \right] \nonumber\\
  \leq & \min_{\frac{1}{2p}<\alpha<\frac{1}{4}} \left[\left|\frac{\sin\pi\alpha}{\pi}\right| \sqrt{4p}  \left(\frac{1}{2\sqrt{\pi}}\right)^{\frac{1}{2}+\frac{1}{p}} \times \frac{1}{\beta^{\alpha - \frac{1}{2p}}} \Gamma\Big(\alpha - \frac{1}{2p}\Big) \times \left( \frac{1}{\beta^{\frac{1}{2} - 2\alpha}} \Gamma\Big(\frac{1}{2} - 2\alpha\Big) \right)^{\frac{1}{2}} \right] \nonumber\\
  \leq & \frac{\sqrt{4p}}{\pi}\left(\frac{1}{2\sqrt{\pi}}\right)^{\frac{1}{2}+\frac{1}{p}} \frac{1}{\beta^{\frac{1}{4}-\frac{1}{2p}}} \times \left(\Gamma\Big(\frac{p-2}{6p}\Big)\right)^{\frac{3}{2}}. 
\end{align*}
Here we have used
\begin{align}\label{260621.1425}
  \min_{\frac{1}{2p}<\alpha<\frac{1}{4}} \left[\Gamma\Big(\alpha -\frac{1}{2p}\Big) \times \Big(\Gamma\Big(\frac{1}{2} -2\alpha\Big)\Big)^{\frac{1}{2}}\right] = \left(\Gamma\Big(\frac{p-2}{6p}\Big)\right)^{\frac{3}{2}}.
\end{align}
This is because
\[
\frac{d}{d\alpha}\log \left[\Gamma\Big(\alpha -\frac{1}{2p}\Big) \times \Big(\Gamma\Big(\frac{1}{2} -2\alpha\Big)\Big)^{\frac{1}{2}}\right] = \psi\big(\alpha -\frac{1}{2p}\big) - \psi\big(\frac{1}{2}-2\alpha\big), 
\]
where $\psi(x) = \frac{\Gamma^{\prime}(x)}{\Gamma(x)}$ is the digamma function, which is strictly increasing on $(0,\infty)$. 
Since the trigamma function $\psi^{\prime}$ is strictly positive, the minimum in \eqref{260621.1425} is achieved when $\alpha -\frac{1}{2p} = \frac{1}{2} -2\alpha$, which is equivalent to $\alpha = \frac{p+1}{6p}$. This completes the proof of Proposition \ref{estimates 001}.

\end{proof}

\section{Exponential contractivity}

In this section, we assume that \eqref{1.1} admits a unique solution. We show that the solutions of \eqref{1.1} are exponentially contracting when the drift is  dissipative. We stress that the It\^{o} formula/energy equality is not available for 
such stochastic reaction-diffusion equations driven by space-time white noise. It is tricky to make use of the dissipativity condition (H2).
%Assume equation (\ref{3.1}) has a unique solution in $L^1_{\lambda}$,
\begin{theorem}\label{260206.2029}
Let $u^i(t,\cdot)$ denote the solution to equation \eqref{1.1} with initial value $u_0^i$, $i=1,2$. 
\begin{itemize}
  \item [(i)] Suppose that (H2) holds. 
%  Assume that $\mathbb{E}\int_{\mathbb{R}}|u(t,x)|e^{-\lambda|x|} dx <\infty$ for any $t\geq 0$. 
Then for any $ t\geq 0$,
\begin{align}\label{260207.1931}
    \mathbb{E} \Vert u^2(t) - u^1(t)\Vert_{L^1} \leq 2 e^{-\alpha t} \mathbb{E}\Vert u_0^2 - u_0^1 \Vert_{L^1} .
\end{align}
  \item [(ii)] Suppose that (H1) and (H2) hold. Assume that
   \begin{align}\label{260629.1108}
     \sup_{t\in [0,T]}\mathbb{E}\int_{0}^{1}|u^2(t,x) - u^1(t,x)|^2  dx <\infty, \qquad \forall \ T\geq 0.
   \end{align}

  If $\alpha\geq 0$ when $L_{\sigma} = 0$ or $\alpha>\frac{L_{\sigma}^4}{8}$ when $L_{\sigma} > 0$, 
then for any $\kappa$ satisfying 
\begin{align}\label{260401.1108}
  \begin{cases}
     0\leq \kappa \leq \alpha, & \text{ when } L_{\sigma} = 0,\\
    0\leq \kappa<\alpha - \frac{L_{\sigma}^4}{8}, & \text{ when } L_{\sigma} > 0,
  \end{cases}
\end{align}
there exists a constant $C_{\alpha,\kappa}>0$ such that for any $t\geq 0$,
\begin{align}\label{260208.1116}
\left(\mathbb{E} \int_{0}^{1} |u^2(t,x) - u^1(t,x)|^2  dx\right)^{\frac{1}{2}} \leq 2 C_{\alpha,\kappa} e^{-\kappa t}\left(\mathbb{E}\Vert  u^2_0 - u^1_0 \Vert_{L^2}^2\right)^{\frac{1}{2}}.
\end{align}
    \item [(iii)] Suppose that (H1) and (H2) hold. Assume that for some $p>2$, 
    \begin{align}\label{260620.2115}
      \sup_{t\in [0,T]}\sup_{x\in [0,1]}\mathbb{E}\left[|u^2(t,x) - u^1(t,x)|^p\right] <\infty \quad\text{for any }\ T\geq 0.
    \end{align}
  If 
  \begin{align*}
    \alpha > \left\{\frac{\sqrt{4p}}{\pi}\left(\frac{1}{2\sqrt{\pi}}\right)^{\frac{1}{p}+\frac{1}{2}} \left(\Gamma\Big(\frac{p-2}{6p}\Big)\right)^{\frac{3}{2}} \times L_{\sigma}\right\}^{\frac{4p}{p-2}},
  \end{align*}
  then for any $\kappa$ satisfying 
  \begin{align}\label{260621.1558}
    0<\kappa< \alpha - \left\{\frac{\sqrt{4p}}{\pi}\left(\frac{1}{2\sqrt{\pi}}\right)^{\frac{1}{p}+\frac{1}{2}} \left(\Gamma\Big(\frac{p-2}{6p}\Big)\right)^{\frac{3}{2}} \times L_{\sigma}\right\}^{\frac{4p}{p-2}},
  \end{align}
there exists a constant $C_{p,\alpha,\kappa}>0$ such that for any $t\geq 0$,
\begin{align}\label{260620.2004}
\left(\mathbb{E} \Big[\sup_{x\in [0,1]} |u^2(t,x) - u^1(t,x)|^p \Big]  \right)^{\frac{1}{p}} \leq 2 C_{p,\alpha,\kappa} e^{-\kappa t}\left(\mathbb{E}\Big[ \sup_{x\in [0,1]} | u^2_0 - u^1_0 |^p \Big]\right)^{\frac{1}{p}}.
\end{align}
\end{itemize}
\end{theorem}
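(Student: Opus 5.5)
The plan is to reduce everything to \emph{ordered} pairs of solutions via the comparison principle, and then exploit (H2) through a linearization that avoids any It\^o formula.

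\textbf{Reduction to ordered pairs.} Let $w$ denote the solution of \eqref{1.1} with initial value $u_0^1\wedge u_0^2$; this initial value again lies in $C_0^+([0,1])$. By the comparison principle for reflected SPDEs (proved via the penalized approximations, as in \cite{SZZ}), we have $w\le u^i$ for $i=1,2$. Set $D^i:=u^i-w\ge 0$. Then
\[
|u^2-u^1|\le D^1+D^2, \qquad 0\le D^i(0)=u_0^i-u_0^1\wedge u_0^2\le |u_0^2-u_0^1| .
\]
This is where the factor $2$ in \eqref{260207.1931}, \eqref{260208.1116} and \eqref{260620.2004} comes from. It therefore suffices to bound $D:=v-w\ge 0$ for an ordered pair $v\ge w$ of solutions with $D(0)\le|u_0^2-u_0^1|$.

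\textbf{Structure of the equation for $D$.} Two facts are needed.

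First, I would show that $L^v\le L^w$ as measures. In the penalized equations the penalty terms are $\varepsilon^{-1}(v^\varepsilon)^-$ and $\varepsilon^{-1}(w^\varepsilon)^-$. Since $v^\varepsilon\ge w^\varepsilon$ by comparison, $(v^\varepsilon)^-\le(w^\varepsilon)^-$, and the inequality passes to the limit measures.

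Second, I would write
\[
b(v)-b(w)=-\alpha D+R, \qquad R:=b(v)-b(w)+\alpha D\le 0 .
\]
The sign of $R$ follows from (H2) because $D\ge 0$.

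Writing the mild form with the semigroup $e^{-\alpha t}P_t$, i.e.\ moving $-\alpha D$ into the generator, then gives
\begin{align*}
D(t,x)\le e^{-\alpha t}P_tD(0)(x)+\int_0^t\int_0^1 e^{-\alpha(t-s)}p_{t-s}(x,y)\big(\sigma(v(s,y))-\sigma(w(s,y))\big)W(ds,dy)=:e^{-\alpha t}P_tD(0)(x)+Z(t,x).
\end{align*}
Here the drift term involving $R$ and the term involving $L^v-L^w$ have been discarded because both are nonpositive. Justifying this mild inequality rigorously is the main obstacle. It must be derived at the level of the penalized or regularized equations, where the variation-of-constants formula with $-\alpha D$ is legitimate, and only then passed to the limit.

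\textbf{The three estimates.}

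For (i), take expectations; the stochastic integral has mean zero. Integrating in $x$ and using $D\ge0$ together with \eqref{260207.1152} gives
\[
\mathbb{E}\Vert D(t)\Vert_{L^1}\le e^{-\alpha t}\,\mathbb{E}\Vert D(0)\Vert_{L^1}.
\]
This step does not need (H1); one only needs $Z$ to be a true martingale, or else a localization.

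For (ii), since $D\ge0$ we have $D\le e^{-\alpha t}P_tD(0)+Z^+$. Hence
\[
\mathbb{E}\Vert D(t)\Vert_{L^2}^2\le (1+\delta)e^{-2\alpha t}\,\mathbb{E}\Vert D(0)\Vert_{L^2}^2+(1+\delta^{-1})\,\mathbb{E}\Vert Z(t)\Vert_{L^2}^2 .
\]
By the It\^o isometry, $|\sigma(v)-\sigma(w)|\le L_\sigma D$, and \eqref{260207.2020}, we get
\[
\mathbb{E}\Vert Z(t)\Vert_{L^2}^2\le L_\sigma^2\int_0^t \frac{e^{-2\alpha(t-s)}}{2\sqrt{\pi(t-s)}}\,\mathbb{E}\Vert D(s)\Vert_{L^2}^2\,ds .
\]
Multiplying by $e^{2\kappa t}$ turns this into a Volterra inequality with kernel $k(r)=L_\sigma^2e^{-2(\alpha-\kappa)r}/(2\sqrt{\pi r})$, whose total mass is $L_\sigma^2/(2\sqrt{2(\alpha-\kappa)})$. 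The assumption \eqref{260629.1108} makes $\sup_{[0,T]}$ finite, so the inequality can be closed by a renewal/Gronwall argument provided the effective kernel mass is below $1$. Optimizing over $\delta$ should yield exactly the threshold $\alpha-\kappa>L_\sigma^4/8$. When $L_\sigma=0$ the bound is immediate with $\kappa=\alpha$.

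For (iii), take $\sup_x$ and $L^p(\Omega)$ norms of the inequality for $D$. Use \eqref{260208.1019} for the initial term and Proposition \ref{estimates 001} with $\beta=\alpha-\kappa$ applied to the process $e^{\kappa s}Z$. This gives, with $M_T:=\sup_{t\le T}e^{\kappa t}\Vert\sup_xD(t)\Vert_{L^p(\Omega)}$,
\[
M_T\le \Vert\sup_x D(0)\Vert_{L^p(\Omega)}+C_{p,\alpha-\kappa}L_\sigma M_T .
\]
Assumption \eqref{260620.2115} is needed here: combined with the sample-path continuity of $D$, it should ensure $M_T<\infty$; in any case it gives finiteness of the $\sup_{s\le T}\sup_x$ moment norms that the right-hand side of Proposition \ref{estimates 001} involves. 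Then \eqref{260618.1614} shows that $C_{p,\alpha-\kappa}L_\sigma<1$ under \eqref{260621.1558}, so $M_T$ can be absorbed into the left-hand side uniformly in $T$, which yields \eqref{260620.2004}.
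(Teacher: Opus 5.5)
Your proposal is correct and is essentially the paper's proof. Both reduce to ordered initial data by comparison (you use $u_0^1\wedge u_0^2$, the paper uses $u_0^1\vee u_0^2$), write the mild form with $e^{-\alpha t}P_t$ so that the nonincreasing $b+\alpha\,\mathrm{id}$ and the nonpositive $L^v-L^w$ can be discarded, and close the $L^1$, $L^2$ and $\sup_x$--$L^p$ estimates by absorption under the a priori finiteness hypotheses; your $(1+\delta)$-splitting in (ii) is equivalent to the paper's Minkowski step. One small correction: in (iii), $M_T<\infty$ does not follow from path continuity but from the same inequality, whose right-hand side contains the finite quantity $\sup_{s\le T}\sup_y\Vert D(s,y)\Vert_{L^p(\Omega)}$.
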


\begin{proof}

The proof is divided into two steps.

\textbf{Step 1.} We prove Theorem \ref{260206.2029} under the restriction that $u_0^2(x)\geq u^1_0(x)$ for all $x\in [0,1]$. 

In this case, the comparison theorem gives $u^2(t,x)\geq u^1(t,x)$ for any $(t,x)\in\mathbb{R}_+\times [0,1]$. 
The semigroup generated by $\frac{1}{2}\partial_{xx} - \alpha I$ is $e^{-\alpha t}P_t$, where $P_t$ is defined in (\ref{260603.1931}). 
Hence, the solution to (\ref{1.1}) admits the following mild form:
%So we write the solution to equation (\ref{3.1}) in a mild form:
\begin{align*}
  u^i(t,x) = & e^{-\alpha t} P_t u_0^i(x) + \int_{0}^{t} \int_{0}^{1} e^{-\alpha (t-s)} p_{t-s}(x,y)b_{\alpha}(u^i(s,y)) dsdy \nonumber\\
  & + \int_{0}^{t} \int_{0}^{1} e^{-\alpha (t-s)} p_{t-s}(x,y)\sigma (u^i(s,y)) W(ds,dy)  \nonumber\\
  & + \int_{0}^{t} \int_{0}^{1} e^{-\alpha (t-s)} p_{t-s}(x,y)L^i(ds,dy),
\end{align*}
where $b_{\alpha}(u)=b(u)+\alpha u$. The crucial observation is that (H2) holds if and only if $b_{\alpha}(\cdot)$ is decreasing. 
By subtraction, we have
\begin{align*}
  u^2(t,x) -u^1(t,x) = & e^{-\alpha t} P_t (u_0^2-u^1_0)(x) \nonumber\\
  & + \int_{0}^{t} \int_{0}^{1} e^{-\alpha (t-s)} p_{t-s}(x,y)[b_{\alpha}(u^2(s,y)) - b_{\alpha}(u^1(s,y))] dsdy \nonumber\\
  & + \int_{0}^{t} \int_{0}^{1} e^{-\alpha (t-s)} p_{t-s}(x,y)[\sigma (u^2(s,y)) - \sigma (u^1(s,y))]W(ds,dy) \\
  & + \int_{0}^{t} \int_{0}^{1} e^{-\alpha (t-s)} p_{t-s}(x,y)[L^2(ds,dy) - L^1(ds,dy)].
\end{align*}
By the proof of Theorem 1.4 of \cite{NP92}, $L^2(ds,dy)-L^1(ds,dy)$ is a negative measure. This yields
\begin{align}
  \int_{0}^{t} \int_{0}^{1} e^{-\alpha (t-s)} p_{t-s}(x,y)[L^2(ds,dy) - L^1(ds,dy)] \leq 0.
\end{align}
Since $u^2(s,y)\geq u^1(s,y)$ and $b_{\alpha}$ is decreasing, 
we deduce that
%By (\ref{260206.2004}), $b_{\alpha}$ is a decreasing function, it follows from $u^2(s,y)\geq u^1(s,y)$ and (\ref{260207.1135}) that
\begin{align}\label{260207.2002}
  u^2(t,x) - u^1(t,x) \leq & e^{-\alpha t} P_t (u_0^2-u^1_0)(x) \nonumber\\
  & + \int_{0}^{t} \int_{0}^{1} e^{-\alpha (t-s)} p_{t-s}(x,y)[\sigma (u^2(s,y)) - \sigma (u^1(s,y))]W(ds,dy).
\end{align}
Taking expectations in the above inequality, we obtain
\begin{align*}
  \mathbb{E}|u^2(t,x) - u^1(t,x)| = \mathbb{E}[u^2(t,x) - u^1(t,x)] \leq e^{-\alpha t}\mathbb{E}[ P_t (u_0^2-u^1_0)(x)].
\end{align*}
Applying (\ref{260208.1019}) with $p=1$ and Fubini's theorem yield
\begin{align*}
   \mathbb{E}\Vert u^2(t) - u^1(t) \Vert_{L^1} \leq e^{-\alpha  t} \mathbb{E}\Vert u_0^2 - u_0^1 \Vert_{L^1} .
\end{align*}
%Therefore, for any $\alpha>0$, we can always take sufficiently small $\lambda>0$ such that $\lambda<$
This proves (\ref{260207.1931}) in the case $u_0^2\geq u^1_0$.

\vskip 0.3cm

Then, we prove (\ref{260208.1116}) under the condition $u_0^2\geq u^1_0$. 
Taking the $L^2(\Omega)$-norm on both sides of (\ref{260207.2002}), we obtain
\begin{align*}
  & \Vert u^2(t,x) -u^1(t,x) \Vert_{L^2(\Omega)} \nonumber\\
   \leq & e^{-\alpha t} \Vert P_t(u^2_0 - u^1_0)(x) \Vert_{L^2(\Omega)} + \left\Vert \int_{0}^{t}\int_{0}^{1} e^{-\alpha (t-s)} p_{t-s}(x,y)[\sigma (u^2(s,y)) - \sigma (u^1(s,y))]W(ds,dy) \right\Vert_{L^2(\Omega)} \\
%  = & e^{-\alpha t} \Vert P_t(u^2_0 - u^1_0)(x) \Vert_{L^2(\Omega)} + \left\{\mathbb{E} \int_{0}^{t}\int_{\mathbb{R}} e^{-2\alpha (t-s)} p_{t-s}(x,y)^2[\sigma (u^2(s,y)) - \sigma (u^1(s,y))]^2 dsdy \right\}^{\frac{1}{2}} \\
  \leq & e^{-\alpha t} \Vert P_t(u^2_0 - u^1_0)(x) \Vert_{L^2(\Omega)} + \left\{ \mathbb{E}\int_{0}^{t}\int_{0}^{1} e^{-2\alpha (t-s)} p_{t-s}(x,y)^2 L_{\sigma}^2 | u^2(s,y) -  u^1(s,y)|^2 dsdy \right\}^{\frac{1}{2}} .
\end{align*}
We then take the $L^2$-norm with respect to the spatial variable $x$ on both sides of the above inequality to get
\begin{equation*}
  \begin{aligned}
  & \left(\mathbb{E}\Vert u^2(t) - u^1(t)\Vert_{L^2}^2\right)^{\frac{1}{2}} \leq  e^{-\alpha t} \left(\mathbb{E}\Vert P_t(u^2_0 - u^1_0)\Vert_{L^2}^2\right)^{\frac{1}{2}} \\
 & + \left\{\mathbb{E}\int_{0}^{t}\int_{0}^{1} \int_{0}^{1} e^{-2\alpha (t-s)} p_{t-s}(x,y)^2 L_{\sigma}^2 \vert u^2(s,y) -  u^1(s,y)\vert^2  dsdydx \right\}^{\frac{1}{2}}.
\end{aligned}
\end{equation*}
By (\ref{260208.1019}), Fubini's theorem and (\ref{260207.2020}), we have
\begin{align}\label{260207.2034}
  & \left(\mathbb{E}\Vert u^2(t) - u^1(t)\Vert_{L^2}^2\right)^{\frac{1}{2}} \leq  e^{-\alpha t} \left(\mathbb{E}\Vert u^2_0 - u^1_0\Vert_{L^2}^2\right)^{\frac{1}{2}} \nonumber\\
 & + \left\{ \int_{0}^{t}  \frac{L_{\sigma}^2}{2\sqrt{\pi (t-s)}}  e^{-2\alpha (t-s)} \mathbb{E}\int_{0}^{1} \vert u^2(s,y) -  u^1(s,y)\vert^2 dyds \right\}^{\frac{1}{2}}.
\end{align}
For $\kappa\geq 0$, set
\[
\mathcal{N}_T^{\kappa}(u):= \sup_{0\leq t\leq T} \left[ e^{\kappa t} \left(\mathbb{E} \int_{0}^{1} |u(t,x)|^2 dx\right)^{\frac{1}{2}} \right].
\]
It follows from (\ref{260207.2034}) that
\begin{align}\label{260208.1055}
  & \mathcal{N}_T^{\kappa}(u^2 -u^1) \nonumber\\
  \leq & \sup_{0\leq t\leq T} \left[e^{-(\alpha -\kappa) t}\right] \left(\mathbb{E}\Vert  u^2_0 - u^1_0 \Vert_{L^2}^2\right)^{\frac{1}{2}} \nonumber\\
  & + \sup_{0\leq t\leq T}\left\{\int_{0}^{t}  \frac{L_{\sigma}^2}{2\sqrt{\pi (t-s)}}  e^{-2\alpha (t-s)} e^{2\kappa (t-s)} \mathcal{N}_T^{\kappa}(u^2 -u^1)^2 ds \right\}^{\frac{1}{2}} \nonumber\\
 \leq & \sup_{0\leq t\leq T} \left[e^{-(\alpha -\kappa) t}\right] \left(\mathbb{E}\Vert  u^2_0 - u^1_0 \Vert_{L^2}^2\right)^{\frac{1}{2}} \nonumber\\
  & + \mathcal{N}_T^{\kappa}(u^2 -u^1) \times \sup_{t\geq 0}\left\{ \frac{L_{\sigma}^2}{2\sqrt{\pi}}\int_{0}^{t}  \frac{1}{\sqrt{ (t-s)}}e^{- 2(\alpha-\kappa)(t-s)}  ds \right\}^{\frac{1}{2}} \nonumber\\
  \leq & \sup_{0\leq t\leq T} \left[e^{-(\alpha -\kappa) t}\right] \left(\mathbb{E}\Vert  u^2_0 - u^1_0 \Vert_{L^2}^2\right)^{\frac{1}{2}}  + \mathcal{N}_T^{\kappa}(u^2 -u^1) \times \left[8(\alpha-\kappa)\right]^{-\frac{1}{4}} L_{\sigma}.
\end{align}
By condition (\ref{260401.1108}), we have
\begin{align*}
\alpha   -\kappa \geq 0 \quad\text{ and }\quad \left[8(\alpha-\kappa)\right]^{-\frac{1}{4}} L_{\sigma} <1 .
\end{align*}
Hence (\ref{260208.1055}) implies that
\begin{align}\label{260605.1351}
  \mathcal{N}_T^{\kappa}(u^2 -u^1) \leq \left(\mathbb{E}\Vert  u^2_0 - u^1_0 \Vert_{L^2}^2\right)^{1/2} +  \mathcal{N}_T^{\kappa}(u^2 -u^1) \left[8(\alpha-\kappa)\right]^{-\frac{1}{4}} L_{\sigma}.
\end{align}
%Squaring both sides of (\ref{260207.2034}), using the fact that $\mathbb{E}\Vert u^2(t) - u^1(t)\Vert_{L^2}^2<\infty$ for any $t\geq 0$, and applying Gronwall's inequality, we find that 
\eqref{260629.1108} means that 
$\mathcal{N}^{\kappa}_T(u^2 -u^1)<\infty$ for any $T>0$. Hence (\ref{260605.1351}) gives
\begin{align*}
  \sup_{T\geq 0}\mathcal{N}_T^{\kappa}(u^2 -u^1) \leq  C_{\alpha,\kappa} \left(\mathbb{E}\Vert  u^2_0 - u^1_0 \Vert_{L^2}^2\right)^{\frac{1}{2}},
\end{align*}
where 
%the constant $C_{\alpha,\lambda,\kappa}$ is
\begin{align*}
    C_{\alpha,\kappa}:= \left(1-\left[8\big(\alpha-\kappa\big)\right]^{-\frac{1}{4}} L_{\sigma} \right)^{-1}.
\end{align*}
In particular, we obtain
\begin{align*}
\left(\mathbb{E} \int_{0}^{1} |u^2(t,x) - u^1(t,x)|^2  dx\right)^{\frac{1}{2}} \leq C_{\alpha,\kappa} e^{-\kappa t}\left(\mathbb{E}\Vert  u^2_0 - u^1_0 \Vert_{L^2}^2\right)^{1/2}.
\end{align*}
This proves (\ref{260208.1116}) in the case of $u_0^2\geq u^1_0$.

\vskip 0.3cm

Next, we prove (\ref{260620.2004}) under the condition $u_0^2\geq u^1_0$. 
For $\kappa\geq 0$, set
\[
\mathcal{N}_{p,T}^{\kappa}(u):= \sup_{0\leq t\leq T} \left[ e^{\kappa t} \left(\mathbb{E} \Big[ \sup_{x\in [0,1]} |u(t,x)|^p \Big]\right)^{\frac{1}{p}} \right].
\]
Taking $\sup_{x\in [0,1]}$ on both sides of (\ref{260207.2002}) gives
\begin{align*}
  & \sup_{x\in [0,1]}| u^2(t,x) - u^1(t,x)| =  \sup_{x\in [0,1]}[ u^2(t,x) - u^1(t,x)]  \nonumber\\
  \leq & e^{-\alpha t} \sup_{x\in [0,1]} |P_t (u_0^2-u^1_0)(x)| \nonumber\\
  & + \sup_{x\in [0,1]} \left|\int_{0}^{t} \int_{0}^{1} e^{-\alpha (t-s)} p_{t-s}(x,y)[\sigma (u^2(s,y)) - \sigma (u^1(s,y))]W(ds,dy)\right|.
\end{align*}
%Then taking the $L^p(\Omega)$-norm on both sides of the above inequality, by \eqref{260208.1019} we have
Therefore,
\begin{align*}
  & \mathcal{N}_{p,T}^{\kappa}(u^2 - u^1) \leq \sup_{t\in [0,T]} e^{-(\alpha-\kappa)t} \cdot \sup_{x\in [0,1]}| u_0^2(x) - u_0^1(x)| \nonumber\\
  & + \sup_{t\in [0,T]}\left\Vert \sup_{x\in [0,1]} \left|\int_{0}^{t} \int_{0}^{1} e^{-(\alpha-\kappa) (t-s)} p_{t-s}(x,y)\big[e^{\kappa s} \big(\sigma (u^2(s,y)) - \sigma (u^1(s,y))\big)\big]W(ds,dy)\right| \right\Vert_{L^p(\Omega)}, 
\end{align*}
where we have used \eqref{260208.1019}. 
The stochastic convolution can be estimated by \eqref{260617.2015}. Thus
\begin{align}\label{260620.2245}
 \mathcal{N}_{p,T}^{\kappa}(u^2 - u^1) 
  \leq & \sup_{t\in [0,T]} e^{-(\alpha-\kappa)t} \cdot \sup_{x\in [0,1]}| u_0^2(x) - u_0^1(x)| \nonumber\\
 & + C_{p,\alpha-\kappa} \sup_{s\in [0,T]}\sup_{y\in [0,1]} \big\Vert e^{\kappa s} \big( \sigma(u^2(s,y))  - \sigma(u^1(s,y)) \big) \big\Vert_{L^p(\Omega)} \nonumber\\
  \leq & \sup_{t\in [0,T]} e^{-(\alpha-\kappa)t} \cdot \sup_{x\in [0,1]}| u_0^2(x) - u_0^1(x)| \nonumber\\
  & + C_{p,\alpha-\kappa}L_{\sigma}\sup_{s\in [0,T]}\left(e^{\kappa s}\sup_{y\in [0,1]} \big\Vert  u^2(s,y)  -  u^1(s,y) \big\Vert_{L^p(\Omega)}\right) ,
\end{align}
where the constant $C_{p,\alpha-\kappa}$ is the constant appearing in \eqref{260617.2015} with $\beta$ replaced by $\alpha-\kappa$. 
It follows from \eqref{260620.2115} that for any $0< \kappa<\alpha$, 
\begin{align*}
  \mathcal{N}_{p,T}^{\kappa}(u^2 - u^1) < \infty, \quad \forall \ T\geq 0.
\end{align*}
By \eqref{260620.2245}, 
\begin{align*}
 \mathcal{N}_{p,T}^{\kappa}(u^2 - u^1) \leq  \sup_{t\in [0,T]} e^{-(\alpha-\kappa)t} \cdot \sup_{x\in [0,1]}| u_0^2(x) - u_0^1(x)| + C_{p,\alpha-\kappa} L_{\sigma} \mathcal{N}_{p,T}^{\kappa}(u^2 - u^1) .
\end{align*}
Note that \eqref{260621.1558} is equivalent to $0<\kappa <\alpha$ and $C_{p,\alpha-\kappa} L_{\sigma} <1$. 
Hence 
\begin{align*}
  \sup_{T\geq 0}\mathcal{N}_{p,T}^{\kappa}(u^2 - u^1) \leq (1 - C_{p,\alpha-\kappa}L_{\sigma})^{-1} \sup_{x\in [0,1]}| u_0^2(x) - u_0^1(x)|,
\end{align*}
which, in particular, implies \eqref{260620.2004}.

\vskip 0.6cm

\textbf{Step 2.} We remove the restriction $u_0^2(x)\geq u_0^1(x)$, $x\in [0,1]$.

Let $u^{(1,2)}(t,x)$ denote the solution to equation (\ref{1.1}) with initial value $u_0^1(x)\vee u_0^2(x)$. Note that the constants appearing in (\ref{260207.1931}) and (\ref{260208.1116}) are independent of the initial values. Hence, for $i=1,2$,
\begin{align*}
  \left(\mathbb{E} \int_{0}^{1} |u^i(t,x) - u^{(1,2)}(t,x)|^2  dx\right)^{\frac{1}{2}} \leq C_{\alpha,\kappa} e^{-\kappa t}\left(\mathbb{E}\Vert  u^i_0 - u^1_0\vee u^2_0 \Vert_{L^2}^2\right)^{\frac{1}{2}}.
\end{align*}
Note that for $i=1,2$,
\begin{align*}
  |u_0^i(x) -(u_0^1\vee u_0^2)(x)| \leq |u_0^2(x) - u_0^1(x)|.
\end{align*}
Hence, by the triangle inequality we obtain
\begin{align*}
    \left(\mathbb{E} \int_{0}^{1} |u^2(t,x) - u^{1}(t,x)|^2  dx\right)^{\frac{1}{2}} \leq 2 C_{\alpha,\kappa} e^{-\kappa t}\left(\mathbb{E}\Vert  u^2_0 - u^1_0 \Vert_{L^2}^2\right)^{\frac{1}{2}}.
\end{align*}
This completes the proof of \eqref{260208.1116}. The proofs of  \eqref{260207.1931} and \eqref{260620.2004} in this case are  similar, and we omit the details.
\end{proof}

\section{Exponential mixing}
In this section, we first prove an abstract exponential mixing theorem for reflected stochastic reaction-diffusion equations. We then give concrete conditions under which the hypotheses of the abstract theorem are fulfilled. 
Throughout this section, unless otherwise stated, we assume by default that \eqref{1.1} admits a unique solution. 
\begin{theorem}\label{260209.1719}
Let $u$ be the solution of equation \eqref{1.1} with initial value $f$. 
\begin{itemize}
  \item [(i)] Suppose that (H2) holds. If $\alpha > 0$ and there exists $f\in L^1$ such that
\begin{align}\label{260209.1121-1}
  \sup_{t\geq 0}\mathbb{E} \Vert u(t)\Vert_{L^1} <\infty ,
\end{align}
then there exists a unique invariant measure in $L^1$. Moreover, the solution is exponentially mixing with respect to the Wasserstein metric $W_1$.
  \item [(ii)] Suppose that (H1) and (H2) hold. 
  If $\alpha>\frac{L_{\sigma}^4}{8}$ and there exists $f\in L^2$ such that
\begin{align}\label{260209.1121-2}
  \sup_{t\geq 0}\mathbb{E} [\Vert u(t)\Vert^2_{L^2}] <\infty ,
\end{align}
then there exists a unique invariant measure in $L^2$. Moreover, the solution is exponentially mixing with respect to the Wasserstein metric  $W_2$.
  \item [(iii)] Suppose that (H1) and (H2) hold. 
    If there exists $p>2$ such that
  \begin{align*}
    \alpha > \left\{\frac{\sqrt{4p}}{\pi}\left(\frac{1}{2\sqrt{\pi}}\right)^{\frac{1}{p}+\frac{1}{2}} \left(\Gamma\Big(\frac{p-2}{6p}\Big)\right)^{\frac{3}{2}} \times L_{\sigma}\right\}^{\frac{4p}{p-2}},
  \end{align*}
  and there exists $f\in C_0^+([0,1])$ such that
\begin{align}\label{260621.1650}
  \sup_{t\geq 0}\mathbb{E} \Big[\sup_{x\in [0,1]} | u(t,x)|^p\Big] <\infty ,
\end{align}
then there exists a unique invariant measure in $C_0^+([0,1])$. Moreover, the solution is exponentially mixing with respect to the Wasserstein metric $W_p$.
\end{itemize}
\end{theorem}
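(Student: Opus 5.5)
The plan is the standard route from contractivity to ergodicity: use Theorem \ref{260206.2029} to show that the Markov semigroup $P_t^*$ acting on probability measures is a strict contraction in the relevant Wasserstein metric. Then obtain an invariant measure as a limit of $\mathcal{L}(u(t))$ via a Cauchy argument. I treat the three cases in parallel and write $W_q$ with $(E,\rho)=(L^1,\Vert\cdot\Vert_{L^1})$, $(L^2,\Vert\cdot\Vert_{L^2})$, $(C_0^+([0,1]),\Vert\cdot\Vert_\infty)$ respectively, with $q=1,2,p$.

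\emph{Contraction on measures.} For $\mu,\nu\in\mathcal{P}_q(E)$, take an optimal coupling $(u_0^1,u_0^2)$, which may be chosen $\mathcal{F}_0$-measurable and independent of $W$. Apply Theorem \ref{260206.2029} (i), (ii) or (iii) to the solutions started from $u_0^1,u_0^2$. This yields
\begin{align*}
W_q(P_t^*\mu,P_t^*\nu)\le \big(\mathbb{E}\rho(u^1(t),u^2(t))^q\big)^{1/q}\le C e^{-\kappa t}W_q(\mu,\nu),
\end{align*}
where $\kappa=\alpha$ in case (i) and $\kappa$ is as in \eqref{260401.1108} or \eqref{260621.1558} otherwise. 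Two points need care. First, the integrability hypotheses \eqref{260629.1108} and \eqref{260620.2115} must be verified for solutions from such initial laws; I would deduce them from finite-time moment bounds for \eqref{1.1}, which I take as part of the standing well-posedness assumption. Second, in case (iii) the contraction is formulated on the space of measures with finite $p$-th sup-moment.

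\emph{Existence.} Let $\mu_t=\mathcal{L}(u(t))$ with $u(0)=f$. By the Markov property, $\mu_{t+s}=P_t^*\mu_s$, so $W_q(\mu_{t+s},\mu_t)\le Ce^{-\kappa t}W_q(\mu_s,\delta_f)$. The uniform moment bounds \eqref{260209.1121-1}, \eqref{260209.1121-2} and \eqref{260621.1650} give $\sup_s W_q(\mu_s,\delta_f)\le \sup_s(\mathbb{E}\rho(u(s),f)^q)^{1/q}<\infty$. Hence $(\mu_t)$ is Cauchy in the complete space $(\mathcal{P}_q(E),W_q)$ and converges to some $\mu$. Invariance follows from $P_s^*\mu=\lim_t P_s^*\mu_t=\lim_t\mu_{t+s}=\mu$, using the continuity of $P_s^*$ in $W_q$ from the contraction bound. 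In case (iii) $C_0^+([0,1])$ is a closed subset of $C([0,1])$, so it is Polish and the limit stays supported there.

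\emph{Uniqueness and mixing.} If $\nu$ is another invariant measure in $\mathcal{P}_q(E)$, then $W_q(\mu,\nu)=W_q(P_t^*\mu,P_t^*\nu)\le Ce^{-\kappa t}W_q(\mu,\nu)\to0$. Also, $W_q(P_t^*\nu,\mu)\le Ce^{-\kappa t}W_q(\nu,\mu)$ for any initial law $\nu$, which is exponential mixing.

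The main obstacle is the first step. It requires applying Theorem \ref{260206.2029} with random initial data coupled optimally, and checking the a priori hypotheses \eqref{260629.1108} and \eqref{260620.2115} in that setting. For case (i) I also want the invariant measure to have finite first moment, which the limit argument gives since $W_1(\mu_t,\delta_0)$ is bounded.
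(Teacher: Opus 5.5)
Your argument is correct, modulo the finite-time moment input you flag, but it takes a different route from the paper.

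\textbf{The paper's route.} It works pathwise, by pullback. It extends $W$ to a two-sided sheet and starts the equation from the fixed $f$ at time $-\delta$. It then applies Theorem \ref{260206.2029} to $u_{-\delta}$ and $u_{-\gamma}$ on $[-\gamma,\infty)$, which shows that $u_{-\gamma}(0)$ is Cauchy in $L^2(\Omega;L^2)$. The invariant measure $\mu$ is the law of the strong limit $\xi$, and mixing is read off from $W_2(\mu_s,\mu)\le(\mathbb{E}\Vert u_{-s}(0)-\xi\Vert_{L^2}^2)^{1/2}$.

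\textbf{How your route differs.} Your estimate $W_q(\mu_{t+s},\mu_t)\le Ce^{-\kappa t}W_q(\mu_s,\delta_f)$ is the same comparison run forward in time: the solution from $u(s)$ against a fresh solution from $f$. The difference is that you keep only the laws and use completeness of $(\mathcal{P}_q(E),W_q)$ rather than of $L^2(\Omega;E)$.

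\textbf{What each buys.} The pullback produces a strong limit, i.e.\ a stationary solution on the given probability space. Its existence step only compares two processes started from $f$, so the integrability condition \eqref{260629.1108} comes for free from \eqref{260209.1547}. Your route needs no two-sided noise and gives the stronger statement $W_q(P_t^*\nu,\mu)\le Ce^{-\kappa t}W_q(\nu,\mu)$ for every $\nu\in\mathcal{P}_q(E)$. It also actually proves invariance, which the paper only asserts after $\mu_t\to\mu$.

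\textbf{Your ``main obstacle'' is smaller than you think.} In your Cauchy step the compared processes are $u(s+\cdot)$ and a copy of $u$. So \eqref{260629.1108} and \eqref{260620.2115} follow directly from \eqref{260209.1121-2} and \eqref{260621.1650}, just as in the paper. Mixing from $\delta_f$ then follows by letting the second time tend to infinity in the Cauchy bound. Uniqueness within $\mathcal{P}_q(E)$ needs nothing extra, because stationary solutions have constant moments.

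\textbf{Where the extra input is really needed.} The additional finite-time bounds are required only for continuity of $P_s^*$ at $\mu$ and for mixing from a general $\nu$, and not at all in case (i). Even there, deterministic initial data suffice. The contraction for deterministic pairs makes $P_s\phi$ Lipschitz for bounded Lipschitz $\phi$, and hence $\int\phi\,d\mu_{t+s}=\int P_s\phi\,d\mu_t\to\int P_s\phi\,d\mu$. The paper's invariance step tacitly relies on the same input.

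\textbf{A wording point.} Since $2C_{\alpha,\kappa}\ge 2$, $P_t^*$ is a contraction only for large $t$, not a strict contraction for all $t$. Your argument uses only the former, so nothing breaks.
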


\begin{proof} We prove only (ii), since the proofs of (i) and (iii) are similar. 
Let $W(t,x)$ be the Brownian sheet corresponding to the space-time white noise $W(dt,dx)$, and let $W_1(t,x)$ be another Brownian sheet on $[0,\infty)\times [0,1]$ that is independent of $W(t,x)$. Set
\begin{align*}
  \overline{W}(t,x) :=
  \begin{cases}
    W(t,x), & \mbox{if} \quad  t\geq 0,\  x\in [0,1], \\
    W_1(-t,x), & \mbox{if} \quad t<0, \ x\in [0,1].
  \end{cases}
\end{align*}
Let $\overline{\mathcal{F}}_t$ be the filtration generated by $\{\overline{W}(s,x): s\leq t, \ x\in [0,1]\}$ and satisfying the usual conditions. For any $\delta \geq 0$, consider the following SPDE:
\begin{align}\label{260209.1109}
\begin{cases}
    \partial_t u(t,x) = \frac{1}{2}\partial_{xx} u(t,x) + b(u(t,x)) +  \sigma(u(t,x)) \overline{W}(dt,dx) +  \overline{L}(dt,dx), \quad x\in [0,1],\\
    u(t,x)\geq 0, \quad t\geq -\delta, \ x\in [0,1],\\
    u(-\delta,x) = f(x), \quad x\in [0,1], \\
    u(t,0) = u(t,1) = 0, \quad \forall \ t\geq -\delta.
\end{cases}
\end{align}
Then (\ref{260209.1109}) admits a unique solution, 
%Due to the Lipschitz continuity of $b$ and $\sigma$, there exists a unique solution to equation (\ref{260209.1109}), 
which we denote by $u_{-\delta}(t,x)$ for $t\geq -\delta$.
By condition (\ref{260209.1121-2}) and the fact that 
the law $\mathcal{L}(u_{-\delta}(t))$ of $u_{-\delta}(t)$ equals the law of $u(t+\delta)$ on $L^2$, 
there exists a constant $C>0$ independent of $t$ and $\delta$ such that
\begin{align}\label{260209.1547}
  \mathbb{E} [ \Vert u_{-\delta}(t)\Vert_{L^2}^2 ] \leq C, \quad \forall\  \delta\geq 0 , \  t\geq -\delta .
\end{align}
For $\delta>\gamma$, consider the solutions $u_{-\delta}$ and $u_{-\gamma}$ on the interval $[-\gamma,\infty)$. Arguing as in the proof of Theorem \ref{260206.2029}, we obtain
\begin{equation}\label{260209.1618}
\begin{aligned}
  \left(\mathbb{E} [\Vert u_{-\delta}(t) - u_{-\gamma}(t)\Vert_{L^2}^2 ]\right)^{\frac{1}{2}} \leq & 2 C_{\alpha,\kappa} e^{-\kappa(t+\gamma)} \left(\mathbb{E}[\Vert  u_{-\delta}(-\gamma) - f\Vert_{L^2}^2] \right)^{\frac{1}{2}} \\
  \leq &  2 C_{\alpha,\kappa} e^{-\kappa(t+\gamma)} \left(\Vert f\Vert_{L^2} + C \right), \quad t\geq -\gamma, 
\end{aligned}
\end{equation}
where we have used (\ref{260209.1547}) in the last line. This implies that for any $t\in\mathbb{R}$, the sequence of random variables $\{u_{-\gamma}(t)\}_{\gamma\geq 0}$ is a Cauchy sequence in $L^2(\Omega, L^2)$ as $\gamma\rightarrow\infty$. Let
\begin{align*}
  \xi := \lim_{\gamma\rightarrow\infty} u_{-\gamma}(0), \quad \mu := \mathcal{L}(\xi) \text{ on }  L^2.
\end{align*}
Then $\mu$ is the unique invariant measure and is independent of the initial value $f$. In fact, the law
\begin{align*}
 \mu_t := \mathcal{L} (u_0(t)) =  \mathcal{L} (u_{-t}(0))  \rightarrow \mu, \quad \text{ as } t\rightarrow\infty. 
\end{align*}
Hence $\mu$ is an invariant measure. The uniqueness of invariant measures follows from Theorem \ref{260206.2029}. Setting $t=0$ and $\gamma=s$ in (\ref{260209.1618}) and then letting $\delta\rightarrow +\infty$, we obtain
\begin{equation*}
\begin{aligned}
  W_2(\mu_s,\mu) \leq \left(\mathbb{E} [\Vert u_{-s}(0) - \xi \Vert_{L^2}^2 ]\right)^{\frac{1}{2}}
  \leq  2 C_{\alpha,\kappa} e^{-\kappa s} \left(\Vert f\Vert_{L^2} + C \right) .
\end{aligned}
\end{equation*}
Therefore, the solution in $L^2$ is exponentially mixing with respect to the Wasserstein metric $W_2$.
\end{proof}

\vskip 0.6cm

We next provide more concrete conditions on the coefficients $b$ and $\sigma$ that guarantee the existence and uniqueness of invariant measures and exponential mixing. 
Introduce

\vskip 0.4cm

\noindent {\bf (H3)} $b$ is locally Lipschitz and satisfies
  \begin{align}\label{260607.1313}
    |b(x) - b(y)| \leq L_b |x-y|(1+|x|^{\nu-1}+|y|^{\nu-1}),\quad x,y\in\mathbb{R},
  \end{align}
for some constant $L_b\geq 0$, where $\nu\geq 1$.

Under conditions (H1), (H2) and (H3), we shall prove the existence of a unique invariant measure in $C_0^+([0,1])$ and exponential mixing with respect to the Wasserstein metric $W_p$. As a preparation, we will give a  uniform-in-time moment estimate for the solution to (\ref{1.1}), which is also of independent interest. For this estimate, we impose the following conditions.
\begin{itemize}
  \item [(S1)] The initial value $u_0 \in C_0^+([0,1])$. 
  \item [(S2)] The coefficient $\sigma$ is Lipschitz and satisfies 
\begin{align}\label{260616.1301}
  |\sigma(u)| \leq C_{\sigma} + G_{\sigma}|u|, \quad u\in\mathbb{R}.
\end{align}
  \item [(S3)] There exist constants $C_b\geq 0$ and $\theta\in\mathbb{R}$ such that
\begin{align}\label{260603.2134}
  ub(u)\leq C_b - \theta|u|^2, \quad \forall\  u\in\mathbb{R}.
\end{align}
\end{itemize}

\begin{proposition}\label{260604.1227}
Assume that (S1)--(S3) and (H3) hold, and that
\[
\theta> \left\{\frac{\sqrt{4p}}{\pi}\left(\frac{1}{2\sqrt{\pi}}\right)^{\frac{1}{p}+\frac{1}{2}} \left(\Gamma\Big(\frac{p-2}{6p}\Big)\right)^{\frac{3}{2}} \times 2G_{\sigma}\right\}^{\frac{4p}{p-2}}
\]
for some $p>2$. 
Then the unique solution $u$ to equation \eqref{1.1} satisfies
\begin{align*}
  \sup_{t\geq 0}\mathbb{E}\Big[\sup_{x\in [0,1]} |u(t,x)|^p\Big] <\infty.
\end{align*}
\end{proposition}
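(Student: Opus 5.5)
We will prove the bound by comparison with an unreflected equation, and then run the same weighted fixed-point argument as in the proof of Theorem \ref{260206.2029}(iii), using the stochastic convolution estimate \eqref{260617.2015}.

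\textbf{Step 1: rewrite the drift.} Write $b(u)=b_\theta(u)-\theta u$ with $b_\theta(u):=b(u)+\theta u$. By (S3), $u\,b_\theta(u)\le C_b$. Since $u\ge 0$ for the reflected solution, only $u\geq 0$ matters. Condition (S3) gives $b_\theta(u)\le C_b/u$ for $u>0$, and local boundedness from (H3) gives $b_\theta\le C$ on $[0,1]$. Hence $b_\theta(u)\le K$ for all $u\ge 0$, for some constant $K$.

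\textbf{Step 2: mild form and discarding the reflection.} Using the semigroup $e^{-\theta t}P_t$ as in Step 1 of Theorem \ref{260206.2029}, the solution satisfies
\begin{align*}
u(t,x)=&\,e^{-\theta t}P_tu_0(x)+\int_0^t\!\!\int_0^1 e^{-\theta(t-s)}p_{t-s}(x,y)\,b_\theta(u(s,y))\,dyds\\
&+\int_0^t\!\!\int_0^1 e^{-\theta(t-s)}p_{t-s}(x,y)\,\sigma(u(s,y))\,W(ds,dy)+\int_0^t\!\!\int_0^1 e^{-\theta(t-s)}p_{t-s}(x,y)\,L(ds,dy).
\end{align*}
The reflection term is nonnegative, so it cannot simply be dropped from an upper bound. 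This is the main obstacle.

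To handle it, I will compare $u$ with the solution $v$ of the unreflected equation with the same noise, drift and diffusion, and with the same initial datum $u_0$. Constructing this $v$ requires some care, because $\sigma(u)$ rather than $\sigma(v)$ should appear in the noise term. I will take $v$ to solve the \emph{linear} equation
\begin{align*}
v(t,x)=e^{-\theta t}P_tu_0(x)+\int_0^t\!\!\int_0^1 e^{-\theta(t-s)}p_{t-s}(x,y)\,K\,dyds+\int_0^t\!\!\int_0^1 e^{-\theta(t-s)}p_{t-s}(x,y)\,\sigma(u(s,y))\,W(ds,dy),
\end{align*}
with $\sigma(u)$ frozen. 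Then $w:=u-v$ solves a reflected-type deterministic equation: it is driven by the nonpositive forcing $b_\theta(u)-K$ plus a measure $L$ supported on $\{u=0\}=\{w=-v\}$.

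I will then show $w\le (-v)^+$, equivalently $u\le v^+$. This is a deterministic obstacle-problem comparison in the style of Nualart--Pardoux, in the spirit of the method of \cite{SZZ}. The idea is that $L$ only acts where $u=0\le v^+$, and off that set $w$ is a subsolution of the heat equation. If this comparison proves delicate, my fallback is the penalization approximation $L\approx \varepsilon^{-1}u^-$: for the penalized equation, the penalty term is bounded by the corresponding term with $v^-$, and I then pass to the limit.

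\textbf{Step 3: closing the estimate.} From $0\le u\le v^+$ we get
\[
\sup_x|u(t,x)|\le K/\theta+\sup_x\Big|\int_0^t\!\!\int_0^1 e^{-\theta(t-s)}p_{t-s}(x,y)\,\sigma(u(s,y))\,W(ds,dy)\Big|+\|u_0\|_\infty .
\]
I take $L^p(\Omega)$ norms, multiply by $e^{\kappa t}$ and define $\mathcal N^{\kappa}_{p,T}$ as before. Applying \eqref{260617.2015} with $\beta=\theta-\kappa$ and using $|\sigma(u)|\le C_\sigma+G_\sigma|u|$ gives
\[
\mathcal N_{p,T}^{0}(u)\le C+C_{p,\theta}\big(C_\sigma+G_\sigma\,\mathcal N^0_{p,T}(u)\big).
\]
The hypothesis has a factor $2G_\sigma$, which leaves room for the possibly lossy comparison or a triangle inequality. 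In any case it guarantees $C_{p,\theta}G_\sigma<1$, by the explicit bound \eqref{260618.1614}.

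Finiteness of $\mathcal N^0_{p,T}(u)$ for each fixed $T$ is needed to absorb the last term. It follows from the standard finite-horizon moment bounds for the reflected solution; if necessary I will obtain it via a stopping time together with (H3). Absorbing then gives $\sup_T\mathcal N^0_{p,T}(u)<\infty$, which is the claim.
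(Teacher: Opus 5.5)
\paragraph{The gap is in Step~2.} The pointwise comparison $u\le v^+$ (equivalently $w\le(-v)^+$) is false, so Step~3 has nothing to start from.

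The obstacle problem for $w$ has memory in time. Your maximum-principle heuristic would need the barrier $(-v)^+=\max(0,-v)$ to be a supersolution of $\partial_t-\frac12\partial_{xx}+\theta$ off the contact set. It is not one, because $-v$ is an arbitrary continuous field.

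Concretely, take $b(u)=-\theta u$, $K=0$, $\sigma\equiv 1$ and $u_0=0$; these satisfy (S1)--(S3) and (H3). Then the forcing $b_\theta(u)-K$ vanishes, $w=\int_0^t\!\int_0^1 e^{-\theta(t-s)}p_{t-s}\,dL\ge 0$, and $w(t_1+\delta)\ge e^{-\theta\delta}P_\delta w(t_1)$. With positive probability, $v(t_1,\cdot)$ is uniformly close to $-\sin(\pi\cdot)$ while $v(t_1+\delta,\cdot)$ is uniformly close to $0$. Then $w(t_1)\ge -v(t_1)$ is of order one, so $\sup_x w(t_1+\delta,x)$ is of order one, while $(-v(t_1+\delta))^+$ is tiny. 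The one-dimensional Skorokhod analogue $u(t)=v(t)+\sup_{s\le t}v(s)^-$ shows the same effect.

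Your penalization fallback targets the same false inequality, so it cannot rescue it. What does hold is
\[
0\le w(t,x)\le \sup_{s\le t}\,e^{-\theta(t-s)}\sup_{y}v^-(s,y).
\]
To see this, compare the penalized equations with the right-hand side. It is a spatially constant supersolution that lies above the obstacle $-v$. With $Y$ the stochastic convolution, this gives
\[
\sup_x u(t,x)\le\|u_0\|_\infty+K/\theta+2\sup_{s\le t}e^{-\theta(t-s)}\sup_x|Y(s,x)|.
\]
Step~3 would then need a bound, uniform in $t$, on $\mathbb{E}\bigl[\sup_{s\le t}e^{-p\theta(t-s)}\sup_x|Y(s,x)|^p\bigr]$. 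This is a maximal-in-time estimate, which \eqref{260617.2015} does not provide because it is a fixed-time bound. It would have to be proved separately and would come with a different constant, so the stated threshold on $\theta$ does not follow automatically.

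\paragraph{Comparison with the paper.} The paper never compares $u$ pointwise with an unreflected field. It sandwiches $u$ between reflected solutions, $v_-\le u\le v_+$, using the comparison principle for reflected equations with ordered drifts $h_-\le b_\theta\le h_+$.

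Your Step~1 observation is correct: only the bound $b_\theta\le K$ on $[0,\infty)$ matters, since $u\ge 0$. It also shows that much of the paper's machinery is dispensable. Because $v_+\ge 0$, one has $h_+(v_+)\equiv C_{b,\theta}$, so $v_+$ is essentially the reflected counterpart of your $v$.

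The paper then passes from a reflected solution to its unreflected part via $\sup_x|\tilde v(t,x)|\le 2\sup_x|\widetilde{\Psi}(t,x)|$ at a fixed time $t$. The same objection applies there. The obstacle-problem estimate from Nualart--Pardoux and Xu--Zhang controls the supremum over $[0,t]\times[0,1]$, and at a fixed time it fails for the reason above. So you cannot close Step~2 by importing that step. A correct argument along either route needs the discounted running-supremum bound together with a maximal-in-time estimate for the stochastic convolution.
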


%\begin{remark}
%
%Theorem 3 of \cite{AM03} also establishes a related uniform-in-time estimate in a weighted $L^p$ space. However, the uniform-in-time estimate is obtained in a different way here. As a result, our condition on $\theta$ is expressed in terms of the linear-growth constant $G_{\sigma}$ in (S2), rather than the Lipschitz constant of $\sigma$, as in \cite{AM03}. In the present setting, the Lipschitz continuity of $\sigma$ is used mainly to ensure well-posedness of the equation, whereas the a priori estimate itself only uses the growth bound (\ref{260616.1301}). In this sense, the condition imposed on $\theta$ here is less restrictive than that in \cite{AM03}.
%
%\end{remark}

The proof of Proposition \ref{260604.1227} is inspired by that of Theorem 3 in \cite{AM03} and will be given at the end of this section.

\begin{theorem}\label{260604.1328}

Suppose that (H1), (H2) and (H3) hold. 
If 
\begin{align}\label{260621.2019}
  \alpha> \left\{\frac{\sqrt{4p}}{\pi}\left(\frac{1}{2\sqrt{\pi}}\right)^{\frac{1}{p}+\frac{1}{2}} \left(\Gamma\Big(\frac{p-2}{6p}\Big)\right)^{\frac{3}{2}} \times 2 L_{\sigma}\right\}^{\frac{4p}{p-2}}
\end{align}
for some $p>2$, then for every initial value $f \in C_0^+([0,1])$, the unique solution $u$ to equation \eqref{1.1} with initial value $f$ satisfies
\begin{align}\label{260621.2010}
  \sup_{t\geq 0}\mathbb{E}\Big[\sup_{x\in [0,1]} |u(t,x)|^p\Big] <\infty.
\end{align}
In particular, there exists a unique invariant measure $\mu$ in $C_0^+([0,1])$. Moreover, the solution is exponentially mixing with respect to the Wasserstein metric $W_p$.
\end{theorem}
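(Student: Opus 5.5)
The plan is to deduce Theorem \ref{260604.1328} from Proposition \ref{260604.1227} together with Theorem \ref{260209.1719}(iii). The main step is to show that (H1)–(H3) imply (S1)–(S3) with constants good enough that the threshold on $\theta$ in Proposition \ref{260604.1227} follows from \eqref{260621.2019}.

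\textbf{Conditions (S1) and (S2).} Condition (S1) is simply the assumption $f\in C_0^+([0,1])$. For (S2), (H1) gives
\[
|\sigma(u)|\le |\sigma(0)|+L_\sigma|u|,
\]
so we may take $C_\sigma=|\sigma(0)|$ and $G_\sigma=L_\sigma$.

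\textbf{Condition (S3).} Apply (H2) with $y=0$ to get $(b(u)-b(0))u\le-\alpha u^2$. Hence
\[
ub(u)\le -\alpha u^2+|b(0)||u| \le C_b-(\alpha-\varepsilon)u^2,
\]
by Young's inequality, with $C_b=|b(0)|^2/(4\varepsilon)$. So (S3) holds with $\theta=\alpha-\varepsilon$ for any $\varepsilon>0$.

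\textbf{Comparing the thresholds.} Write $K_p$ for the constant $\frac{\sqrt{4p}}{\pi}(\frac{1}{2\sqrt\pi})^{1/p+1/2}\Gamma(\frac{p-2}{6p})^{3/2}$. Hypothesis \eqref{260621.2019} is strict: $\alpha>(2K_pL_\sigma)^{4p/(p-2)}$. We can therefore choose $\varepsilon>0$ small enough that $\theta=\alpha-\varepsilon$ still satisfies $\theta>(2K_pG_\sigma)^{4p/(p-2)}$, which is the hypothesis of Proposition \ref{260604.1227}. Since (H3) is assumed directly, Proposition \ref{260604.1227} applies and gives \eqref{260621.2010} for every $f\in C_0^+([0,1])$. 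It is plausible that this step is the only place where the factor $2$ is used. It likely comes from having to control $\sigma(u)$ through the growth bound, whose linear part has constant $G_\sigma=L_\sigma$, inside a comparison argument with an auxiliary equation.

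\textbf{Invariant measure and mixing.} Theorem \ref{260209.1719}(iii) needs $\alpha>(K_pL_\sigma)^{4p/(p-2)}$. This follows from \eqref{260621.2019}, because $(2K_pL_\sigma)^{4p/(p-2)}\ge (K_pL_\sigma)^{4p/(p-2)}$. It also needs the existence of some $f\in C_0^+([0,1])$ satisfying \eqref{260621.1650}, which we have just obtained for every such $f$. Theorem \ref{260209.1719}(iii) then gives a unique invariant measure $\mu$ on $C_0^+([0,1])$ and exponential mixing in $W_p$.

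\textbf{Main obstacle.} The deduction above is routine bookkeeping once Proposition \ref{260604.1227} is available. The real difficulty is Proposition \ref{260604.1227} itself, namely the uniform-in-time $\sup_x$ moment bound without an Itô formula. For that I would first try the following. Because $u\ge0$, we have $\sup_x|u|=\sup_x u$. Compare $u$ from above with the solution of the non-reflected equation, or bound it through the mild form with the semigroup $e^{-\theta t}P_t$, drop the reflection term $-L$ (which is controlled via a comparison principle), and treat the remaining dissipative drift as in Step 1 of Theorem \ref{260206.2029}. The stochastic convolution is then estimated by \eqref{260617.2015} with $\beta$ slightly less than $\theta$. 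Finally, the estimate is closed with a weighted-supremum fixed-point argument exactly as for $\mathcal N^\kappa_{p,T}$, using a priori finiteness on finite time intervals.
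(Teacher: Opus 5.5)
Your argument is correct and matches the paper's proof: you choose $\varepsilon>0$ so that $\theta=\alpha-\varepsilon$ stays above the threshold, get (S3) from (H2) at $y=0$ plus Young's inequality with $C_b=|b(0)|^2/(4\varepsilon)$, take $C_\sigma=|\sigma(0)|$ and $G_\sigma=L_\sigma$ from (H1), apply Proposition~\ref{260604.1227}, and conclude with Theorem~\ref{260209.1719}(iii), whose weaker threshold (no factor $2$) is implied. On your side speculation, the factor $2$ in fact comes from the reflection estimate $\sup_x|v(t,x)|\le 2\sup_x|\Psi(t,x)|$ inside the proof of Proposition~\ref{260604.1227}, because for a single solution the reflection term enters the mild form with a nonnegative sign and cannot be dropped from an upper bound as in Step 1 of Theorem~\ref{260206.2029}; this does not affect your proof of the theorem.
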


\begin{remark}
If the initial value of (\ref{1.1}) belongs to $C_0^+([0,1])$, then (H1), (H2) and (H3) imply that equation (\ref{1.1}) admits a unique solution. 
\end{remark}

\begin{proof}
By Theorem \ref{260209.1719} (iii), 
we only need to prove \eqref{260621.2010} for any initial value $f\in C_0^+([0,1])$. 
Since $\alpha$ satisfies \eqref{260621.2019}, there exists a sufficiently small $\varepsilon>0$ such that
\[
\alpha-\varepsilon> \left\{\frac{\sqrt{4p}}{\pi}\left(\frac{1}{2\sqrt{\pi}}\right)^{\frac{1}{p}+\frac{1}{2}} \left(\Gamma\Big(\frac{p-2}{6p}\Big)\right)^{\frac{3}{2}} \times 2 L_{\sigma}\right\}^{\frac{4p}{p-2}}.
\]
Fix such $\varepsilon$. Taking $y=0$ in (H2) gives
\begin{align*}
  (b(u) - b(0))u \leq -\alpha |u|^2, \quad\forall\  u\in\mathbb{R},
\end{align*}
which implies
\begin{align}\label{260605.2001}
  ub(u) \leq -(\alpha - \varepsilon) |u|^2 + \frac{|b(0)|^2}{4\varepsilon}, \quad\forall\  u\in\mathbb{R}.
\end{align}
By (H1), 
\[
|\sigma(u) | \leq |\sigma(0)| + L_{\sigma} |u|. 
\]
Set $\theta:= \alpha -\varepsilon$ and $G_{\sigma}:= L_{\sigma} $. Then applying Proposition \ref{260604.1227} gives \eqref{260621.2010}. 
\end{proof}

\vskip 0.6cm

Next, we turn to the proof of Proposition \ref{260604.1227}. 
We begin with three auxiliary lemmas. Consider the auxiliary equation: 
\begin{align}\label{260603.1828}
	\left\{
	\begin{aligned}
		d v(t,x) &=[\frac{1}{2}\partial_{xx} v(t,x) - mv(t,x)]dt + \tilde{b}(v(t,x))dt + \sigma(v(t,x)) W(dt,dx)+L(dt,dx),\\
        v(t,x) & \geq 0, \quad t\geq 0, \ x\in [0,1],\\
        v(0,x)&=u_0(x), \quad x\in [0,1], \\
        v(t,0) &= v(t,1) = 0, \quad \forall\  t\geq 0,
	\end{aligned}
	\right.
\end{align}
where $m>0$.

\begin{lemma}\label{260603.2255}
Assume that (S1) and (S2) hold. Let $\tilde{v}$ be the unique solution to \eqref{260603.1828} with $\tilde{b}(v)  = - \gamma v$. 
Then, for any $m>0$, $p> 2$ and 
\begin{align}\label{260621.1621}
  \gamma > \left\{\frac{\sqrt{4p}}{\pi}\left(\frac{1}{2\sqrt{\pi}}\right)^{\frac{1}{p}+\frac{1}{2}} \left(\Gamma\Big(\frac{p-2}{6p}\Big)\right)^{\frac{3}{2}} \times 2G_{\sigma}\right\}^{\frac{4p}{p-2}},
\end{align}
we have
\begin{align}\label{260603.1925}
  \sup_{t\geq 0}\mathbb{E} \Big[ \sup_{x\in [0,1]}  |\tilde{v}(t,x)|^p \Big] <\infty.
\end{align}
\end{lemma}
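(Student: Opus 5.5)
We use the mild formulation with semigroup $e^{-(m+\gamma)t}P_t$, exactly as in Step 1 of the proof of Theorem \ref{260206.2029}. Since $\tilde b(v)=-\gamma v$ is absorbed into the linear part, the solution $\tilde v$ of \eqref{260603.1828} satisfies
\begin{align*}
  \tilde v(t,x) = & e^{-(m+\gamma)t}P_t u_0(x) + \int_0^t\int_0^1 e^{-(m+\gamma)(t-s)}p_{t-s}(x,y)\sigma(\tilde v(s,y))W(ds,dy) \\
  & + \int_0^t\int_0^1 e^{-(m+\gamma)(t-s)}p_{t-s}(x,y)\tilde L(ds,dy).
\end{align*}
The reflection term is nonnegative, so it cannot be discarded as an upper bound. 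Instead, we plan to use the comparison principle. Let $w$ solve the unreflected linear equation $dw=[\frac12\partial_{xx}w-(m+\gamma)w]dt+\sigma(\tilde v)W(dt,dx)$ with $w(0)=u_0$, with the same noise coefficient frozen at $\tilde v$. Then $\tilde v-w=\int\!\!\int e^{-(m+\gamma)(t-s)}p_{t-s}\tilde L\ge 0$, which is the wrong direction.

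The standard fix is the Nualart--Pardoux characterization: $\tilde v = w + z$, where $z$ is the minimal solution of the obstacle problem $z\ge -w$ for the deterministic heat operator. One has the bound $0\le z(t,x)\le \sup_{s\le t,y}(w(s,y))^-$, or, more usefully with the damping, $z(t,x)\le \sup_{s\le t}e^{-(m+\gamma)(t-s)}\sup_y w^-(s,y)$. The sup over $s$ is awkward for uniform-in-time bounds. We therefore plan to bound $\tilde v\le |w|+z$ pointwise and, for $z$, exploit that $\tilde v\ge0$ means $z$ only pushes up where $w<0$. Concretely, $0\le \tilde v(t,x)$ and, since $\tilde L$ is supported on $\{\tilde v=0\}$, a Skorokhod-type estimate $\sup_x|\tilde v(t)|\le 2\sup_{s\le t}$ of the damped convolution is likely. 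The simplest route that keeps uniformity in time is probably a block argument: on each unit interval $[n,n+1]$, restart from $\tilde v(n)$, where the damping factor $e^{-(m+\gamma)}$ contracts the initial datum.

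Writing $Z(t,x)$ for the damped stochastic convolution, the target inequality is
\[
\sup_x|\tilde v(t,x)|\le e^{-(m+\gamma)(t-n)}\sup_x \tilde v(n,x)+2\sup_{s\in[n,t]}\sup_x|Z_n(s,x)|,
\]
where $Z_n$ is the convolution started at time $n$. The factor $2$ in the threshold \eqref{260621.1621} strongly suggests that the authors obtain exactly such a bound with constant $2$: a term from $w$ plus an equal term from $z$. We then take $L^p(\Omega)$ norms. Proposition \ref{estimates 001}, in the form \eqref{260617.2015} with $\beta=\gamma$ (discarding $m$), gives
\[
\|\sup_x|Z|\|_{L^p(\Omega)}\le C_{p,\gamma}\sup_{s,y}\|\sigma(\tilde v(s,y))\|_{L^p(\Omega)}\le C_{p,\gamma}\bigl(C_\sigma+G_\sigma\sup_{s,y}\|\tilde v(s,y)\|_{L^p(\Omega)}\bigr).
\]
Combining these, we obtain with $\mathcal N_T:=\sup_{t\le T}\|\sup_x\tilde v(t)\|_{L^p(\Omega)}$, which is finite for each $T$ by standard well-posedness estimates,
\[
\mathcal N_T\le \|u_0\|_\infty+2C_{p,\gamma}C_\sigma+2C_{p,\gamma}G_\sigma\mathcal N_T .
\]
Condition \eqref{260621.1621} is precisely $2C_{p,\gamma}G_\sigma<1$ by \eqref{260618.1614}, so we can absorb the last term, which yields $\sup_T\mathcal N_T<\infty$.

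The main obstacle is making the comparison step rigorous with constant exactly $2$ and without a sup over past times that destroys uniformity. We would try first to compare $\tilde v$ with the solution of the reflected equation having additive noise $\sigma(\tilde v)$ frozen. For that equation the reflected solution is $w+\bar z$, where $\bar z$ is the obstacle correction, and $\bar z\le \sup_x w^-$ pointwise via a maximum principle for the damped heat equation. Since $u_0\ge0$, we have $w^-\le |Z|$, so $\tilde v\le |w|+\sup_x|Z|\le e^{-(m+\gamma)t}P_tu_0+2\sup_x|Z(t)|$ at the same time $t$. Whether the obstacle bound holds at the same time $t$, rather than as a sup over $[0,t]$, is the delicate point. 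If it fails, we fall back on the unit-block argument sketched above.
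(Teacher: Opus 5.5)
Your main line is exactly the paper's proof. It uses the mild form with $e^{-(m+\gamma)t}P_t$ and bounds $\sup_x\tilde v(t,x)$ pathwise by twice the unreflected part $\widetilde\Psi(t,x)=e^{-(m+\gamma)t}P_tu_0(x)+Z(t,x)$. It then applies \eqref{260617.2015} with $\beta=m+\gamma$; the explicit bound \eqref{260618.1614} is decreasing in $\beta$, which is the correct way to ``discard $m$''. After that come (S2), finiteness of $\mathcal N_{p,T}$ for each $T$, and absorption via $2C_{p,m+\gamma}G_\sigma<1$. The paper closes the crux in one line: it asserts $\sup_x|\tilde v(t,x)|\le 2\sup_x|\widetilde\Psi(t,x)|$ for every $t$, ``by the proof of Theorem 1.4 of Nualart--Pardoux''. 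Your proposal leaves exactly this step open, so as written it has a gap there.

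Your suspicion is justified, and the step cannot be proved. The Nualart--Pardoux/Xu--Zhang estimate bounds the obstacle correction $z=\tilde v-\widetilde\Psi$ only by $\sup_{s\le t,\,y}\widetilde\Psi^-(s,y)$, or with damping by $\sup_{s\le t}e^{-(m+\gamma)(t-s)}\sup_y\widetilde\Psi^-(s,y)$, as you noted. The same-time inequality is false, as the following example shows.
\begin{itemize}
\item Take $\Psi(t,x)=-Mt\sin(\pi x)$ for $t\in[0,1]$, let it return linearly to $0$ on $[1,1+\delta]$, and keep it $0$ afterwards.
\item Then $z(1,\cdot)\ge M\sin(\pi\,\cdot)$. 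Since the reflection measure is nonnegative, $z(1+\delta,x)\ge Me^{-(m+\gamma+\pi^2/2)\delta}\sin(\pi x)$, while $\Psi(1+\delta,\cdot)\equiv0$.
\item With $\sigma\equiv1$ and $u_0=0$ (allowed by (S1)--(S2)), this $\Psi$ has the form $\int_0^te^{-(m+\gamma)(t-s)}P_{t-s}h(s)\,ds$ with bounded $h$. It therefore lies in the support of the Gaussian field $\widetilde\Psi$.
\item The Skorokhod map is Lipschitz for the supremum norm on $[0,T]\times[0,1]$. Hence the same-time bound fails with positive probability.
\end{itemize}
So the paper's citation does not supply the missing step either.

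Your unit-block fallback does not repair this. The block inequality is correct pathwise, but it puts $\sup_{s\in[n,t]}\sup_x|Z_n(s,x)|$ inside the expectation. Proposition \ref{estimates 001} only controls $\sup_t\Vert\sup_x|\cdot|\Vert_{L^p(\Omega)}$, with the time supremum outside. You would need a separate factorization estimate for $\mathbb{E}\sup_{s\in[n,n+1]}\sup_x|Z_n(s,x)|^p$, uniform in $n$ and linear in $\sup_{s,y}\Vert\sigma(\tilde v(s,y))\Vert_{L^p(\Omega)}$. Even granting such a bound with some constant $K$, summing over blocks gives a closing condition involving $K$ and the factor $(1-e^{-(m+\gamma)})^{-1}$. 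This route therefore yields uniform-in-time moments only under a different smallness condition on $G_\sigma$, not under \eqref{260621.1621} as stated. Neither your argument nor the paper's contains an ingredient that reaches the stated threshold.
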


\begin{proof}
The semigroup generated by $\frac{1}{2}\partial_{xx} - mI - \gamma I$ is $e^{-(m+\gamma) t}P_t$, where $P_t$ is defined in (\ref{260603.1931}). Hence, $\tilde{v}$ admits the following mild representation:
\begin{align*}
  \tilde{v}(t,x) = & e^{-(m+\gamma)t}P_t u_0(x) + \int_{0}^{t}\int_{0}^{1} e^{-(m+\gamma)(t-s)}p_{t-s}(x,y)\sigma(\tilde{v}(s,y)) W(ds,dy)  \nonumber\\
  & + \int_0^t \int_{0}^{1} e^{-(m+\gamma)(t-s)} p_{t-s}(x,y)L(ds,dy).
\end{align*}
Let $\widetilde{\Psi}$ be defined by
\begin{align*}
  \widetilde{\Psi}(t,x) = & e^{-(m+\gamma)t}P_t u_0(x) + \int_{0}^{t}\int_{0}^{1} e^{-(m+\gamma)(t-s)}p_{t-s}(x,y)\sigma(\tilde{v}(s,y)) W(ds,dy).
\end{align*}
By the proof of Theorem 1.4 of \cite{NP92} (see also Proposition 2.1 of \cite{XZ09}),
\begin{align}
\sup_{x\in [0,1]} |\tilde{v}(t,x)| \leq 2 \sup_{x\in [0,1]} |\widetilde{\Psi}(t,x)|, \quad \forall \ t\geq 0.
\end{align}
Set
\[
\mathcal{N}_{p,T}(\tilde{v}):= \sup_{0\leq t\leq T} \Big\Vert \sup_{x\in [0,1]}|\tilde{v}(t,x)| \Big\Vert_{L^p(\Omega)}.
\]
Then
\begin{align}\label{260603.2049}
  \mathcal{N}_{p,T}(\tilde{v}) \leq & 2\sup_{0\leq t\leq T}\sup_{x\in [0,1]}\big[ e^{-(m+\gamma)t}|P_t u_0(x)| \big] \nonumber\\
  & + 2\sup_{0\leq t\leq T}\left\Vert \sup_{x\in [0,1]} \Big|\int_{0}^{t}\int_{0}^{1} e^{-(m+\gamma)(t-s)}p_{t-s}(x,y)\sigma(\tilde{v}(s,y)) W(ds,dy)\Big| \right\Vert_{L^p(\Omega)} \nonumber\\
  =: & I(T) + II(T).
\end{align}
By \eqref{260208.1019}, we have
\begin{align}\label{260603.2156}
  I(\infty)\leq 2\sup_{x\in [0,1]} |u_0(x)|.
\end{align}  
By \eqref{260617.2015} and \eqref{260616.1301}, the term $II$ can be estimated as follows
\begin{align}\label{260604.1025}
  II(T) 
%  = & \sup_{0\leq t\leq T}\sup_{x\in [0,1]} \left\{\mathbb{E}\left| \int_{0}^{t}\int_{0}^{1} e^{-(m+\gamma)(t-s)}p_{t-s}(x,y)\sigma(\tilde{v}(s,y)) W(ds,dy) \right|^p\right\}^{\frac{1}{p}}  \nonumber\\
  \leq & 2C_{p,m+\gamma} \sup_{0\leq s\leq T}\sup_{y\in [0,1]} \Vert \sigma(\tilde{v}(s,y))\Vert_{L^p(\Omega)}  \nonumber\\
  \leq & 2C_{p,m+\gamma}  C_{\sigma} + 2C_{p,m+\gamma} G_{\sigma}  \sup_{0\leq s\leq T}\sup_{y\in [0,1]} \Vert \tilde{v}(s,y) \Vert_{L^p(\Omega)} \nonumber\\
  \leq & 2C_{p,m+\gamma}  C_{\sigma} + 2C_{p,m+\gamma} G_{\sigma}  \sup_{0\leq s\leq T} \Big\Vert \sup_{y\in [0,1]}|\tilde{v}(s,y)| \Big\Vert_{L^p(\Omega)},
\end{align}
where $C_{p,m+\gamma}$ is the constant appearing in \eqref{260617.2015} with $\beta$ replaced by $m+\gamma$. 
Note that \eqref{260621.1621} implies
\[
2C_{p,m+\gamma} G_{\sigma} <1.
\]
Combining (\ref{260603.2049}), (\ref{260603.2156}) and (\ref{260604.1025}) yields
\begin{align}\label{260605.1228}
  \mathcal{N}_{p,T}(\tilde{v}) \leq 2\sup_{x\in [0,1]}|u_0(x)| + 2C_{p,m+\gamma} C_{\sigma}
    + 2C_{p,m+\gamma} G_{\sigma} \times\mathcal{N}_{p,T}(\tilde{v}).
\end{align}
By Theorem 2.1 of \cite{XZ09}, $\mathcal{N}_{p,T}(\tilde{v})<\infty$ for any $T>0$. 
Therefore, (\ref{260605.1228}) gives
\[
\sup_{T\geq 0}\mathcal{N}_{p,T}(\tilde{v})<\infty,
\]
which proves (\ref{260603.1925}).
\end{proof}

Recall condition (S3), and set 
\begin{align}\label{260607.1423}
  C_{b,\theta}:= C_b + \theta + \sup_{|x|\leq 1}|b(x)|. 
\end{align}
It follows from (\ref{260603.2134}) that
\begin{align}\label{260604.1239}
  \begin{cases}
    b(u)\leq -\theta u + C_{b,\theta}, & \forall\ u\geq 0, \\
    b(u)\geq -\theta u - C_{b,\theta}, & \forall\ u\leq 0.
  \end{cases}
\end{align}

\begin{lemma}\label{260604.1225}
 Assume that (S1)--(S3) and (H3) hold. Let $p>2$. Define
  \begin{align*}
    h_{+}(u):=
  \begin{cases}
    C_{b,\theta}, & u\geq 0,\\
    -\gamma u + \sup_{u\leq v\leq 0}\big(b(v) - b(0) + C_{b,\theta}\big), & u\leq 0,
  \end{cases}
  \end{align*}
where $\gamma$ satisfies \eqref{260621.1621} with $p$ replaced by $p\nu$. Let $v_{+}$ be the unique solution to \eqref{260603.1828} with $\tilde{b}=h_{+}$. If
\begin{align}\label{260621.1647}
  m > \left\{\frac{\sqrt{4p}}{\pi}\left(\frac{1}{2\sqrt{\pi}}\right)^{\frac{1}{p}+\frac{1}{2}} \left(\Gamma\Big(\frac{p-2}{6p}\Big)\right)^{\frac{3}{2}} \times 2G_{\sigma}\right\}^{\frac{4p}{p-2}},
\end{align}
then
\begin{align}\label{260603.2314}
  \sup_{t\geq 0}\mathbb{E}\Big[\sup_{x\in [0,1]} |v_{+}(t,x)|^p\Big] <\infty.
\end{align}

\end{lemma}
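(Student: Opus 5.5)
The plan is to work from the mild form of $v_+$ and exploit two features: $v_+\geq 0$, so only the branch $h_+=C_{b,\theta}$ is ever evaluated along the solution, and the pushing term $L$ is controlled by the obstacle estimate of \cite{NP92}. Since $v_+(t,x)\geq 0$ for all $(t,x)$, we have $h_+(v_+(t,x)) = C_{b,\theta}$. The branch of $h_+$ on $u\leq 0$ matters only for well-posedness; it makes $h_+$ locally Lipschitz, continuous at $0$ and of polynomial growth of order $\nu$ by (H3). This is presumably why $\gamma$ is chosen relative to $p\nu$: to guarantee existence and uniqueness via comparison with $\tilde v$ from Lemma \ref{260603.2255}. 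For the moment estimate itself the branch plays no role.

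The semigroup of $\frac12\partial_{xx}-mI$ is $e^{-mt}P_t$, so
\begin{align*}
v_+(t,x) = {}& e^{-mt}P_tu_0(x) + C_{b,\theta}\int_0^t\int_0^1 e^{-m(t-s)}p_{t-s}(x,y)\,dy\,ds \\
& + \int_0^t\int_0^1 e^{-m(t-s)}p_{t-s}(x,y)\sigma(v_+(s,y))\,W(ds,dy) + \int_0^t\int_0^1 e^{-m(t-s)}p_{t-s}(x,y)\,L(ds,dy).
\end{align*}
Let $\Psi_+$ denote the sum of the first three terms. As in the proof of Lemma \ref{260603.2255}, the proof of Theorem 1.4 of \cite{NP92} (see also Proposition 2.1 of \cite{XZ09}) gives $\sup_x|v_+(t,x)|\leq 2\sup_x|\Psi_+(t,x)|$. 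If this has to be justified with the killing term $-mv$ present, one absorbs the factor $e^{-m(t-s)}$ into the obstacle argument exactly as was done there.

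By \eqref{260207.1152} the deterministic drift term is bounded uniformly in $t$ by $C_{b,\theta}/m$. Also $e^{-mt}|P_tu_0|\leq \sup|u_0|$. For the stochastic convolution, \eqref{260617.2015} with $\beta=m$ and (S2) bound it by
\[
C_{p,m}C_\sigma + C_{p,m}G_\sigma\,\mathcal{N}_{p,T}(v_+), \qquad \mathcal{N}_{p,T}(v_+):=\sup_{t\leq T}\Big\Vert\sup_x|v_+(t,x)|\Big\Vert_{L^p(\Omega)}.
\]
This yields
\[
\mathcal{N}_{p,T}(v_+)\leq 2\sup|u_0| + \frac{2C_{b,\theta}}{m} + 2C_{p,m}C_\sigma + 2C_{p,m}G_\sigma\,\mathcal{N}_{p,T}(v_+).
\]
By \eqref{260618.1614}, the constant $C_{p,m}$ is bounded by $K\,m^{-(\frac14-\frac1{2p})}$, where $K$ is the explicit prefactor. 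Hence condition \eqref{260621.1647} is exactly $2C_{p,m}G_\sigma<1$, and we can absorb the last term provided $\mathcal{N}_{p,T}(v_+)<\infty$ for every finite $T$.

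I expect the main obstacle to be this finite-horizon finiteness, since $h_+$ is not globally Lipschitz and Theorem 2.1 of \cite{XZ09} need not apply directly. I would first try to obtain it from positivity: because $v_+\geq 0$, $v_+$ also solves the reflected equation with the globally Lipschitz (constant) drift $C_{b,\theta}$. Uniqueness for the equation with drift $h_+$, which is locally Lipschitz, then identifies $v_+$ with the solution of that globally Lipschitz equation, whose finite-$T$ $p$-th moments of the supremum are finite by \cite{XZ09}. Failing that, I would localize with stopping times $\tau_N=\inf\{t:\sup_x|v_+|\geq N\}$, run the same estimate with the integrand multiplied by $1_{s<\tau_N}$ to get a bound uniform in $N$, and let $N\to\infty$ by Fatou. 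Taking $\sup_T$ then gives \eqref{260603.2314}.
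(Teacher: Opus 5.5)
Your route differs from the paper's and is simpler in two places. However, it copies from the paper one step that does not hold as written.

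\textbf{Comparison of routes.} The paper never uses $v_+\ge 0$ in the drift term.
\begin{itemize}
\item It bounds $|h_+(v_+)|\le |h_+(\tilde v)|$ through the comparison $v_+\ge \tilde v$ and the monotonicity of $h_+$. It then invokes Lemma \ref{260603.2255} at exponent $p\nu$ together with the growth bound from (H3). That, not well-posedness, is why $\gamma$ is tied to $p\nu$.
\item Your observation that $h_+(v_+)\equiv C_{b,\theta}$ reduces this term to $|C_{b,\theta}|/m$. It also shows the detour is unnecessary: $\tilde v\ge 0$ as well, so $h_+(\tilde v)\equiv C_{b,\theta}$ too. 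The negative branch of $h_+$ plays no role at all for the reflected equation.
\item For finiteness on bounded horizons, the paper runs a Picard scheme $v_+^n$ with the drift frozen at $h_+(v_+)$. It bounds $\sup_{t\ge0}$ of each iterate by induction and passes to the limit with Fatou's lemma.
\item You instead identify $v_+$ with the solution of the reflected equation with globally Lipschitz drift $-mv+C_{b,\theta}$, and take finiteness from Theorem 2.1 of \cite{XZ09}. This is the same input the paper uses in Lemma \ref{260603.2255}. The identification is sound, so your stopping-time fallback is not needed.
\end{itemize}

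\textbf{The step that fails.} It is the pathwise bound $\sup_x|v_+(t,x)|\le 2\sup_x|\Psi_+(t,x)|$ at a \emph{fixed} time $t$. This appears verbatim in the paper, both here and in Lemma \ref{260603.2255}.
\begin{itemize}
\item The obstacle estimate behind Theorem 1.4 of \cite{NP92} controls the reflection part only in the norm $\sup_{[0,t]\times[0,1]}$.
\item No bound of the form $\sup_x|v(t,x)|\le C\sup_x|\Psi(t,x)|$ can hold. Take the deterministic obstacle $\Psi(s,x)=-Mg(s)\sin(\pi x)$, with $g$ continuous, $g(0)=0$, $g=1$ on $[1,2]$ and $g=0$ on $[3,\infty)$. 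The reflection part $z=v-\Psi$ satisfies $z(2,\cdot)\ge M\sin(\pi\,\cdot)$. It is a supersolution of $\partial_t-\frac12\partial_{xx}+m$ with zero boundary values. Hence $z(t,x)\ge Me^{-(\pi^2/2+m)(t-2)}\sin(\pi x)>0$ for $t\ge 3$, while $\Psi(t,\cdot)\equiv 0$.
\item Replacing the fixed-time bound by the correct one with $\sup_{s\le t}$ destroys uniformity in $t$. The running supremum of the stochastic convolution in general grows with $t$, and Proposition \ref{estimates 001} only controls fixed times.
\item What one does get, by comparing $z$ with the spatially constant supersolution, is
\[
0\le v_+(t,x)-\Psi_+(t,x)\le \sup_{s\le t}e^{-m(t-s)}\sup_{y\in[0,1]}\max\{-\Psi_+(s,y),0\}.
\]
\end{itemize}
A repair therefore needs two things. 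First, estimates of $\mathbb{E}\sup_{s\in[a,a+1]}\sup_x|\cdot|^p$ for the stochastic convolution that are uniform in $a$; these are obtainable by factorization with the exponential weight. Second, an absorption argument carried out in that windowed norm. The smallness condition this produces is then not obviously \eqref{260621.1647}.
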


\begin{proof}
By (H3), $h_+$ also satisfies the polynomial local Lipschitz condition (H3) as $b$. Moreover,
\[
uh_{+}(u) \leq \frac{1}{2}(|u|^2 + C_{b,\theta}^2).
\] 
Therefore, the argument in the proof of Theorem 1 of \cite{AM03} (see also Theorem 3.4.1 of \cite{MZ99}), together with Theorem 2.1 of \cite{XZ09}, applies to (\ref{260603.1828}) with $\tilde{b}=h_{+}$ and yields the existence and uniqueness of the solution $v_{+}$. It remains to prove the uniform estimate (\ref{260603.2314}). 

Consider an auxiliary equation
\begin{align}\label{260605.1008}
	\left\{
	\begin{aligned}
		d v(t,x) &=[\frac{1}{2}\partial_{xx} v(t,x) - mv(t,x)]dt + h_+(v_+(t,x))dt + \sigma(v(t,x)) W(dt,dx) + L(dt,dx),\\
        v(t,x) & \geq 0, \quad t\geq 0, \ x\in [0,1],\\
        v(0,x)&=u_0(x), \quad x\in [0,1],\\
        v(t,0) &= v(t,1) = 0, \quad \forall\  t\geq 0.
	\end{aligned}
	\right.
\end{align}
Since $\sigma$ is Lipschitz and the drift term $h_+(v_+)$ does not depend on $v$, the above equation has a unique solution. Meanwhile, $v_+$ also satisfies the above equation, which implies that the unique solution to (\ref{260605.1008}) is exactly $v_+$. 
Now construct a sequence of approximating solutions $v_+^n$. Set $v_+^0(t) = u_0$ for any $t\geq 0$, and then iteratively define
\begin{align*}
  v_+^{n+1}(t,x) = & e^{-mt}P_t u_0(x) + \int_{0}^{t}\int_{0}^{1} e^{-m(t-s)}p_{t-s}(x,y)h_+(v_+(s,y))dsdy \nonumber\\
& + \int_{0}^{t}\int_{0}^{1} e^{-m(t-s)}p_{t-s}(x,y)\sigma(v_+^n(s,y))W(ds,dy) \nonumber\\
&  + \int_{0}^{t}\int_{0}^{1} e^{-m(t-s)}p_{t-s}(x,y)L(ds,dy).
\end{align*}
Then
\begin{align}\label{260605.1117}
  \lim_{n\rightarrow\infty} v_+^{n}(t,x) = v_+(t,x) \quad\text{ in } L^2(\Omega),\quad \forall\ (t,x)\in [0,\infty)\times[0,1].
\end{align}
See, e.g., Chapter 3 of Walsh \cite{WA}.

Let $\Psi_+$ be defined by
\begin{align*}
  \Psi_+(t,x) = & e^{-m t}P_t u_0(x) + \int_{0}^{t}\int_{0}^{1} e^{-m(t-s)}p_{t-s}(x,y)h_+(v_+(s,y))dsdy \nonumber\\  
  & + \int_{0}^{t}\int_{0}^{1} e^{-m(t-s)}p_{t-s}(x,y)\sigma(v_+^n(s,y))W(ds,dy).
\end{align*}
By the proof of Theorem 1.4 of \cite{NP92} (see also Proposition 2.1 of \cite{XZ09}),
\begin{align}
\sup_{x\in [0,1]} |v_+^{n+1}(t,x)| \leq 2 \sup_{x\in [0,1]} |\Psi_+(t,x)|, \quad \forall \ t\geq 0.
\end{align}
Set
\[
\mathcal{N}_p(v_+^n):= \sup_{t\geq 0}\Big\Vert \sup_{x\in [0,1]}|v_+^n(t,x)|\Big\Vert_{L^p(\Omega)}.
\]
Then
\begin{align}\label{260604.1046}
  \mathcal{N}_p (v_+^{n+1}) \leq & 2\sup_{t\geq 0}\sup_{x\in [0,1]}\big[ e^{-mt}|P_t u_0(x)| \big] \nonumber\\
  & + 2\sup_{t\geq 0}\left\Vert \sup_{x\in [0,1]}\left| \int_{0}^{t}\int_{0}^{1} e^{-m(t-s)}p_{t-s}(x,y) h_+(v_+(s,y)) dsdy \right| \right\Vert_{L^p(\Omega)} \nonumber\\
  & + 2\sup_{t\geq 0}\left\Vert \sup_{x\in [0,1]}\left|\int_{0}^{t}\int_{0}^{1} e^{-m(t-s)}p_{t-s}(x,y)\sigma(v_{+}^n(s,y)) W(ds,dy) \right| \right\Vert_{L^p(\Omega)} \nonumber\\
  =: & I_1 + I_2 + I_3.
\end{align}

We first prove that $I_2<\infty$ by using the comparison principle. By (\ref{260207.1152}), 
\begin{align}\label{260603.2248}
  I_2\leq & 2\sup_{t\geq 0} \left\Vert \int_{0}^{t}  e^{-m(t-s)} \sup_{y\in [0,1]}| h_+(v_+(s,y))| ds  \right\Vert_{L^p(\Omega)} \nonumber\\  
  \leq & \frac{2}{m} \sup_{s\geq 0}\Big\Vert  \sup_{y\in [0,1]}| h_+(v_+(s,y))| \Big\Vert_{L^p(\Omega)}.
\end{align}
By the definition of $h_+$ and (H3), 
\begin{align}
\label{260603.2241} |h_+(u)|\leq & C_{b,\theta} + (\gamma + L_b)|u| + L_b |u|^{\nu}, \quad \forall\ u\in\mathbb{R},\\
\label{260603.2242}  h_+(u) \geq & -\gamma u,\quad \forall\ u\in\mathbb{R}.
\end{align}
It follows from (\ref{260603.2242}) and the comparison principle that
\[
v_+(t,x) \geq \tilde{v}(t,x), \quad \forall\ (t,x)\in [0,\infty)\times [0,1].
\]
Since $h_+$ is nonincreasing and takes values in $[C_{b,\theta},\infty)$, (\ref{260603.2241}) gives
\begin{align}\label{260603.2247}
  |h_+(v_+(s,y))| \leq |h_+(\tilde{v}(s,y))| \leq C_{b,\theta} + (\gamma + L_b)|\tilde{v}(s,y)| + L_b |\tilde{v}(s,y)|^{\nu}.
\end{align}
Substituting (\ref{260603.2247}) into (\ref{260603.2248}) yields
\begin{align*}
  I_2\leq & \frac{2}{m} \sup_{s\geq 0}\Big\Vert  C_{b,\theta} + (\gamma + L_b)\sup_{y\in [0,1]}| \tilde{v}(s,y)| + L_b \sup_{y\in [0,1]}|\tilde{v}(s,y)|^{\nu} \Big\Vert_{L^p(\Omega)} \nonumber\\  
  \leq & \frac{2C_{b,\theta}}{m} + \frac{2(\gamma + L_b)}{m} \sup_{s\geq 0}\Big\Vert\sup_{y\in [0,1]}| \tilde{v}(s,y)| \Big\Vert_{L^p(\Omega)} + \frac{2L_b}{m} \sup_{s\geq 0}\Big\Vert\sup_{y\in [0,1]}| \tilde{v}(s,y)| \Big\Vert_{L^{p\nu}(\Omega)}^{\nu} \nonumber\\
  < & \infty,
\end{align*}
where we have used Lemma \ref{260603.2255}.

The terms $I_1$ and $I_3$ can be estimated as in (\ref{260603.2156}) and (\ref{260604.1025}). This gives
\begin{gather}
\label{260603.2311}  I_1 \leq 2\sup_{x\in [0,1]}|u_0(x)|, \\
\label{260603.2306}  I_3  \leq 2 C_{p,m} C_{\sigma}  + 2 C_{p,m} G_{\sigma} \mathcal{N}_p(v_+^n),
\end{gather}
where $C_{p,m}$ is the constant appearing in \eqref{260617.2015} with $\beta$ replaced by $m$.
%\[
%C_{p,m} := \frac{1}{2\sqrt{2m}}
%\]
Combining (\ref{260604.1046}), (\ref{260603.2311}) and (\ref{260603.2306}) yields
\begin{align}\label{260605.1114}
  \mathcal{N}_p(v_+^{n+1}) \leq 2\sup_{x\in [0,1]}|u_0(x)| + I_2 + 2 C_{p,m} C_{\sigma} + 2C_{p,m} G_{\sigma} \mathcal{N}_p(v_+^n).
\end{align}
Since $u_0$ is bounded, we have $\mathcal{N}_p(v_+^{0})<\infty$ and hence $\mathcal{N}_p(v_+^{1})<\infty$. Note that
\eqref{260621.1647} is equivalent to $2C_{p,m} G_{\sigma}<1$. Iterating (\ref{260605.1114}) yields
\[
\sup_{n\geq 1}\mathcal{N}_p(v_+^n)<\infty.
\]
By Fatou's lemma and (\ref{260605.1117}),
\[
\mathcal{N}_p(v_+) \leq \liminf_{n\rightarrow\infty} \mathcal{N}_p(v_+^n)<\infty,
\]
which proves (\ref{260603.2314}).
\end{proof}

\begin{lemma}\label{260604.1327}
Assume that (S1)--(S3) and (H3) hold. Let $p>2$. Define
  \begin{align*}
    h_{-}(u):=
  \begin{cases}
        -\gamma u + \inf_{0\leq v\leq u}\big(b(v) - b(0) - C_{b,\theta}\big), & u\geq 0, \\
        - C_{b,\theta}, & u\leq 0,\\
  \end{cases}
  \end{align*}
where $\gamma$ satisfies \eqref{260621.1621} with $p$ replaced by $p\nu$. Let $v_{-}$ be the unique solution to (\ref{260603.1828}) with $\tilde{b}=h_{-}$. If 
$m$ satisfies \eqref{260621.1647}, 
then
\begin{align*}
  \sup_{t\geq 0}\mathbb{E}\Big[ \sup_{x\in [0,1]}|v_{-}(t,x)|^p \Big] <\infty.
\end{align*}

\end{lemma}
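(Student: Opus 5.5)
The plan is to copy the proof of Lemma \ref{260604.1225}, with the comparison now used from above. The solution $v_-$ of the reflected equation \eqref{260603.1828} is nonnegative, and so is the solution $\tilde v$ of Lemma \ref{260603.2255}. Hence only the branch $u\geq 0$ of $h_-$ is relevant along both solutions.

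\textbf{Step 1: well-posedness.} First I will check that $h_-$ inherits (H3) from $b$. On $u\ge 0$ the map $u\mapsto \inf_{0\le v\le u}(b(v)-b(0))$ is a running infimum of a locally Lipschitz function of polynomial type, so it is again locally Lipschitz with the same polynomial bound. On $u\le0$, $h_-$ is constant and it is continuous at $0$. Moreover $uh_-(u)\le \frac12(|u|^2+C_{b,\theta}^2)$: for $u\ge0$ we have $h_-(u)\le -\gamma u-C_{b,\theta}\le 0$, and for $u\le 0$ we have $uh_-(u)=-C_{b,\theta}u\le\frac12(u^2+C_{b,\theta}^2)$. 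So, exactly as for $v_+$, the argument of \cite{AM03} combined with Theorem 2.1 of \cite{XZ09} gives existence and uniqueness of $v_-$.

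\textbf{Step 2: comparison.} Two elementary facts about $h_-$ on $[0,\infty)$ drive the argument. Taking $v=0$ inside the infimum gives $h_-(u)\le -\gamma u-C_{b,\theta}\le-\gamma u$. Also, $h_-$ is nonincreasing there, being the sum of $-\gamma u$ and a running infimum. From $h_-\le -\gamma u$ and the comparison principle for reflected SPDEs, applied with the same $\sigma$, $m$ and $u_0$, we get $0\le v_-(t,x)\le\tilde v(t,x)$.

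Since $h_-<0$ on $[0,\infty)$ and $h_-$ is nonincreasing there, $v_-\le\tilde v$ gives $|h_-(v_-)|=-h_-(v_-)\le -h_-(\tilde v)=|h_-(\tilde v)|$. By (H3), $|b(v)-b(0)|\le L_b(|v|+|v|^\nu)$, so
\[
|h_-(\tilde v)|\le C_{b,\theta}+(\gamma+L_b)|\tilde v|+L_b|\tilde v|^\nu .
\]
The $\gamma$ in the definition of $h_-$ satisfies \eqref{260621.1621} with $p\nu$ in place of $p$, and \eqref{260621.1621} is monotone in $\gamma$, so Lemma \ref{260603.2255} yields uniform-in-time $L^{p\nu}(\Omega)$ and $L^p(\Omega)$ bounds for $\sup_x|\tilde v|$. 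This gives $\sup_{s\ge0}\|\sup_y|h_-(v_-(s,y))|\|_{L^p(\Omega)}<\infty$.

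\textbf{Step 3: freezing the drift and iterating.} I will freeze the drift as $h_-(v_-)$, as in \eqref{260605.1008}; that equation is uniquely solved by $v_-$. I then define Picard iterates $v_-^n$ with the noise coefficient $\sigma(v_-^n)$ and the same reflection, which converge to $v_-$ in $L^2(\Omega)$ pointwise. By the Nualart--Pardoux bound $\sup_x|v_-^{n+1}|\le 2\sup_x|\Psi_-|$, the quantity $\mathcal N_p(v_-^{n+1})$ splits into three terms, handled as follows:
\begin{itemize}
\item the initial term is at most $2\sup_x|u_0|$;
\item the drift term is at most $\frac2m\sup_s\|\sup_y|h_-(v_-)|\|_{L^p(\Omega)}<\infty$, by \eqref{260207.1152} and Step 2;
\item the stochastic convolution is at most $2C_{p,m}C_\sigma+2C_{p,m}G_\sigma\mathcal N_p(v_-^n)$, by \eqref{260617.2015} and (S2).
\end{itemize}
Condition \eqref{260621.1647} is equivalent to $2C_{p,m}G_\sigma<1$, so iterating gives $\sup_n\mathcal N_p(v_-^n)<\infty$. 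Fatou's lemma then transfers the bound to $v_-$.

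The step needing the most care is Step 2: checking the sign and monotonicity of $h_-$ so that the bound from above $v_-\le\tilde v$ really controls $|h_-(v_-)|$. This rests on $v_-\ge0$, which comes from the reflection, and on $h_-$ being negative and nonincreasing on $[0,\infty)$.
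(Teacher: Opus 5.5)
Your proposal is correct and follows essentially the paper's route: the paper's proof reuses the proof of Lemma \ref{260604.1225} except for the drift term $I_2$, which it controls exactly as you do, via $h_-\le-\gamma u$, the comparison $v_-\le\tilde v$, the fact that $h_-$ is nonincreasing with values in $(-\infty,-C_{b,\theta}]$, and the polynomial bound on $|h_-(\tilde v)|$ combined with Lemma \ref{260603.2255}. One small point: the $L^p(\Omega)$ bound on $\sup_x|\tilde v|$ follows from the $L^{p\nu}(\Omega)$ bound by Lyapunov's inequality, so you do not need the monotonicity of \eqref{260621.1621} in $\gamma$ that you invoke.
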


\begin{proof}
  The proof is the same as that of Lemma \ref{260604.1225}, except for the estimate of $I_2$. We give the necessary modifications.
  
By the definition of $h_{-}$ and condition (H3), 
\begin{align}
\label{260604.1216} |h_{-}(u)|\leq & C_{b,\theta} + (\gamma + L_b)|u| + L_b |u|^{\nu}, \quad \forall\ u\in\mathbb{R},\\
\label{260604.1215}  h_{-}(u) \leq & -\gamma u,\quad \forall\ u\in\mathbb{R}.
\end{align}
It follows from (\ref{260604.1215}) and the comparison principle that
\[
v_{-}(t,x) \leq \tilde{v}(t,x), \quad \forall\ (t,x)\in [0,\infty)\times[0,1].
\]
Since $h_-$ is nonincreasing and takes values in $(-\infty, -C_{b,\theta}]$, by (\ref{260604.1216}) we still get
\[
  |h_{-}(v_{-}(s,y))| \leq |h_{-}(\tilde{v}(s,y))| \leq C_{b,\theta} + (\gamma + L_b)|\tilde{v}(s,y)| + L_b |\tilde{v}(s,y)|^{\nu}.
\]
\end{proof}

\begin{proof}[Proof of Proposition \ref{260604.1227}]
By the argument in the proof of Theorem 1 of \cite{AM03} (see also Theorem 3.4.1 of \cite{MZ99}) and Theorem 2.1 of \cite{XZ09}, there exists a unique solution $u$ to equation (\ref{1.1}) under conditions (S1)--(S3) and (H3).  

Let $b_{\theta}(u) = b(u)+\theta u$. 
Then $u$ also solves (\ref{260603.1828}) with $m$ satisfying \eqref{260621.1647} and $\tilde{b}(u)=b_{\theta}(u)$. By (\ref{260604.1239}) and the definitions of $h_+$ and $h_{-}$, a direct computation gives
\begin{align*}
  h_{-}(u) \leq b_{\theta}(u)\leq h_+(u), \quad \forall \ u\in\mathbb{R},
\end{align*}
for any $\gamma\geq 0$. 
The comparison principle then yields
\[
v_{-}(t,x) \leq u(t,x) \leq v_{+}(t,x), \quad \forall \ (t,x)\in [0,\infty)\times[0,1].
\]
Therefore, by Lemma \ref{260604.1225} and \ref{260604.1327}, 
\[
\sup_{t\geq 0}\mathbb{E} \Big[ \sup_{x\in [0,1]} |u(t,x)|^p \Big] \leq \sup_{t\geq 0} \mathbb{E} \Big[ \sup_{x\in [0,1]}|v_+(t,x)|^p\Big] + \sup_{t\geq 0} \mathbb{E} \Big[ \sup_{x\in [0,1]}|v_{-}(t,x)|^p \Big] <\infty,
\]
which completes the proof of Proposition \ref{260604.1227}.
\end{proof}

\vskip 0.3cm
\noindent{\large\bf Acknowledgements}

This work is partially supported by the National Key R\&D Program of China (No. 2022YFA1006001), the National Natural Science Foundation of China (Nos. 12131019, 12571158, 12371151), and the Fundamental Research Funds for the Central Universities (No. WK0010000081).

\bibliographystyle{plain}

\end{document}